\numberwithin{equation}{section}
\numberwithin{figure}{section}
\theoremstyle{plain}
\newtheorem{thm}{\protect\theoremname}[section]
  \theoremstyle{remark}
  \newtheorem{rem}[thm]{\protect\remarkname}
  \theoremstyle{plain}
  \newtheorem{prop}[thm]{\protect\propositionname}
  \theoremstyle{plain}
  \newtheorem{cor}[thm]{\protect\corollaryname}
  \theoremstyle{plain}
  \newtheorem{lem}[thm]{\protect\lemmaname}
\theoremstyle{definition}
\newtheorem{parn}{}[subsection]
  \providecommand{\corollaryname}{Corollary}
  \providecommand{\lemmaname}{Lemma}
  \providecommand{\propositionname}{Proposition}
  \providecommand{\remarkname}{Remark}
\providecommand{\theoremname}{Theorem}
\begin{document}
 \author{Adrien Dubouloz} 
\address{Adrien Dubouloz, Institut de Math\'ematiques de Bourgogne, Universit\'e de Bourgogne, 9 avenue Alain Savary - BP 47870, 21078 Dijon cedex, France}  \email{Adrien.Dubouloz@u-bourgogne.fr}
\author{David R. Finston}
\address{Department of Mathematical Sciences, New Mexico State University, Las Cruces, New Mexico 88003} 
\email{dfinston@nmsu.edu}  
\thanks{Research supported in part by NSF Grant OISE-0936691 and ANR Grant 08-JCJC-0130-01}

\psset{unit=0.8}
\newsavebox{\Xalone}
\savebox{\Xalone}{\pscurve(0,0)(1,0)(1.5,-0.2)(2,0)(2.5,0.2)(3,0)}

\newsavebox{\XaloneHole}
\savebox{\XaloneHole}
{\pscurve(0,0)(1,0)(1.2,-0.1)
\pscurve(1.7,-0.1)(2,0)(2.5,0.2)(3,0)
}

\newsavebox{\XtCzero}
\savebox{\XtCzero}{
\rput(0,0){\usebox{\Xalone}}
\rput(2,2){\usebox{\Xalone}}
\psline(3,0)(5,2)
\psline(0,0)(2,2)
\psline[linecolor=gray, linewidth=2px](1.5,-0.2)(3.5,1.8)
\psline[linecolor=gray, linewidth=0.5px](0.5,0)(2.5,2)
\psline[linecolor=gray, linewidth=0.5px](1,0)(3,2)
\psline[linecolor=gray, linewidth=0.5px](2,0)(4,2)
\psline[linecolor=gray, linewidth=0.5px](2.5,0.2)(4.5,2.2)
}

\newsavebox{\Sspace}
\savebox{\Sspace}{
\rput(0,0){\usebox{\XaloneHole}}
\rput(2,2){\usebox{\XaloneHole}}
\pscurve(1.2,-0.1)(1.25,-0.105)(1.3,-0.1)(1.7,0.1)(2.3,0.5)
(2.5,1.4)(3.4,1.8)(3.6,1.85)(3.65,1.87)(3.7,1.9)

\psline[linecolor=gray, linewidth=0.5px](0.5,0)(2.5,2)
\psline[linecolor=gray, linewidth=0.5px](1,0)(3,2)
\psline[linecolor=gray, linewidth=0.5px](2,0)(4,2)
\psline[linecolor=gray, linewidth=0.5px](2.5,0.2)(4.5,2.2)

}

\newsavebox{\Xpt}
\savebox{\Xpt}{
\psline(0,0)(3,0)
\rput(1.5,0){\textbullet}
\rput(1.5,-0.3){{\scriptsize $o$}}
\rput(3.4,0){{\scriptsize $X$}}
}

\newsavebox{\XCzero}
\savebox{\XCzero}{
\psline[linecolor=gray, linewidth=2px](1.5,0)(3.5,2)
\psline[linecolor=gray, linewidth=0.5px](0.5,0)(2.5,2)
\psline[linecolor=gray, linewidth=0.5px](1,0)(3,2)
\psline[linecolor=gray, linewidth=0.5px](2,0)(4,2)
\psline[linecolor=gray, linewidth=0.5px](2.5,0)(4.5,2)
\psline[linecolor=gray](3,0)(5,2)
\psline[linecolor=gray](0,0)(2,2)
\psline(0,0)(3,0)
\psline(2,2)(5,2)
\rput(2.8,0.9){{\scriptsize $C_0$}}

\rput(4,0.2){{\scriptsize $X\times C_0$}}
\psline{->}(1.5,-0.3) (1.5,-1)
\rput(2,-0.6){{\scriptsize $\mathrm{pr}_1$}}
\rput(0,-1.3){\usebox{\Xpt}}
}

\newsavebox{\Xspace}
\savebox{\Xspace}{
\pscurve(1.2,0)(2.3,0.5)(2.8,1.6)(3.7,2)
\psframe[fillstyle=solid, fillcolor=white, linecolor=white](1.48,0.05)(1.72,0.3)
\psframe[fillstyle=solid, fillcolor=white, linecolor=white](2.4,0.9)(2.7,1.15)
\psframe[fillstyle=solid, fillcolor=white, linecolor=white](3.3,1.8)(3.5,2)
\pscurve(1.7,0)(1.6,0.1) (3.4,1.9)(3.3,2)
\psline[linecolor=gray, linewidth=0.5px](0.5,0)(2.5,2)
\psline[linecolor=gray, linewidth=0.5px](1,0)(3,2)
\psline[linecolor=gray, linewidth=0.5px](2,0)(4,2)
\psline[linecolor=gray, linewidth=0.5px](2.5,0)(4.5,2)
\psline[linecolor=gray](3,0)(5,2)
\psline[linecolor=gray](0,0)(2,2)
\psline(0,0)(1.2,0)
\psline(1.7,0)(3,0)
\psline(5,2)(3.7,2)
\psline(3.3,2)(2,2)

\rput(3.5,0.2){{\scriptsize $\mathfrak{S}$}}
\rput(2.8,0.9){{\scriptsize $C_1$}}
\psline{->}(1.5,-0.3) (1.5,-1)
\rput(2.2,-0.6){{\scriptsize $\mathrm{pr}_1\circ \bar{\varphi}$}}
\rput(0,-1.3){\usebox{\Xpt}}

}

\newsavebox{\XC}
\savebox{\XC}{
\psline[linewidth=2px](1.5,0)(3.5,2)
\psline[linewidth=0.5px](0.5,0)(2.5,2)
\psline[linewidth=0.5px](1,0)(3,2)
\psline[linewidth=0.5px](2,0)(4,2)
\psline[linewidth=0.5px](2.5,0)(4.5,2)
\psline[linewidth=0.5px](3,0)(5,2)
\psline[linewidth=0.5px](0,0)(2,2)
\psline(0,0)(3,0)
\psline(2,2)(5,2)

\rput(2.65,0.9){{\scriptsize $C$}}
\rput(3.9,0.2){{\scriptsize $X\times C$}}
\psline{->}(1.5,-0.3) (1.5,-1)
\rput(2,-0.6){{\scriptsize $\mathrm{pr}_1$}}
\rput(0,-1.3){\usebox{\Xpt}}
}

\newsavebox{\XZ}
\savebox{\XZ}{
\psline(1.3,0)(1.6,0.2)(3.6,2.2)(3.3,2) 
\psline(1.7,0)(1.4,-0.2)(3.4,1.8)(3.7,2)
\psline[linewidth=2px](1.6,0.2)(3.6,2.2)
\psline[linewidth=2px](1.4,-0.2)(3.4,1.8)
\psline[linewidth=0.5px](0.5,0)(2.5,2)
\psline[linewidth=0.5px](1,0)(3,2)
\psline[linewidth=0.5px](2,0)(4,2)
\psline[linewidth=0.5px](2.5,0)(4.5,2)
\psline[linewidth=0.5px](3,0)(5,2)
\psline[linewidth=0.5px](0,0)(2,2)
\psline(0,0)(1.3,0)
\psline(1.7,0)(3,0)
\psline(2,2)(3.3,2)
\psline(3.7,2)(5,2)

\rput(3.5,0.2){{\scriptsize $Z$}}

}

\title{Proper twin-triangular $\mathbb{G}_{a}$-actions on $\mathbb{A}^{4}$
are translations }

\keywords{Additive group; twin-triangular derivation; geometric quotient; locally
trivial }

\subjclass[2000]{14R20, 14L30}
\begin{abstract}
An additive group action on an affine $3$-space over a complex Dedekind
domain $A$ is said to be twin-triangular if it is generated by a
locally nilpotent derivation of $A[y,z_{1},z_{2}]$ of the form $r\partial_{y}+p_{1}(y)\partial_{z_{1}}+p_{2}(y)\partial_{z_{2}}$,
where $r\in A$ and $p_{1},p_{2}\in A[y]$. We show that these actions
are translations if and only if they are proper. Our approach avoids
the computation of rings of invariants and focuses more on the nature
of geometric quotients for such actions. 
\end{abstract}

\dedicatory{Dedicated to Jim K. Deveney on the occasion of his retirement}

\maketitle

\section*{Introduction }

In 1968, Rentschler \cite{Rentschler1968} established in a pioneering
work that every algebraic action of the additive group $\mathbb{G}_{a}=\mathbb{G}_{a,\mathbb{C}}$
on the complex affine space $\mathbb{A}^{2}$ is triangular in a suitable
polynomial coordinate system. Consequently, every set-theoretically
free $\mathbb{G}_{a}$-action is a translation, in the sense that
$\mathbb{A}^{2}$ is equivariantly isomorphic to $\mathbb{A}^{1}\times\mathbb{A}^{1}$
where $\mathbb{G}_{a}$ acts by translations on the second factor.
An example due to Bass \cite{Bass1984} in 1984 shows that in higher
dimensions, $\mathbb{G}_{a}$-actions are no longer triangulable in
general, and Winkelmann \cite{Winkelmann1990} constructed in 1990
a set-theoretically free $\mathbb{G}_{a}$-action on $\mathbb{A}^{4}$
which is not a translation. The question about set-theoretically free
$\mathbb{G}_{a}$-actions on $\mathbb{A}^{3}$ was eventually settled
affirmatively first by Deveney and the second author \cite{Deveney1994a}
in 1994 under the additional assumption that the action is proper
and then in general by Kaliman \cite{Kaliman2004a} in 2004. 

For proper actions, the argument turns out to be much simpler than
the general one, the crucial fact being that combined with the flatness
of the algebraic quotient morphism $\pi:\mathbb{A}^{3}\rightarrow\mathbb{A}^{3}/\!/\mathbb{G}_{a}={\rm Spec}(\Gamma(\mathbb{A}^{3},\mathcal{O}_{\mathbb{A}^{3}})^{\mathbb{G}_{a}})$
which is obtained from dimension considerations, properness implies
that the action is locally trivial in the Zariski topology, i.e. that
$\mathbb{A}^{3}$ is covered by invariant Zariski affine open subsets
of the from $V_{i}=U_{i}\times\mathbb{A}^{1}$ on which $\mathbb{G}_{a}$
acts by translations on the second factor. The factoriality of $\mathbb{A}^{3}$
implies in turn that a geometric quotient $\mathbb{A}^{3}/\mathbb{G}_{a}$
exists as a quasi-affine open subset of $\mathbb{A}^{3}/\!/\mathbb{G}_{a}\simeq\mathbb{A}^{2}$
with at most finite complement, and the equality $\mathbb{A}^{3}/\mathbb{G}_{a}=\mathbb{A}^{3}/\!/\mathbb{G}_{a}$
ultimately follows by comparing Euler characteristics. 

Local triviality in the Zariski topology is actually a built-in property
of proper $\mathbb{G}_{a}$-actions on smooth algebraic varieties
of dimension less than four. Indeed, recall that an action $\mu:\mathbb{G}_{a}\times X\rightarrow X$
on an algebraic variety $X$ is said to be proper if the morphism
$\mu\times\mathrm{pr}_{2}:\mathbb{G}_{a}\times X\rightarrow X\times X$
is proper, in this context in fact a closed immersion since $\mathbb{G}_{a}$
has no nontrivial algebraic subgroup. Being in particular set-theoretically
free, such an action is then locally trivial in the \'etale topology,
i.e., there exists an \'etale covering $U\times\mathbb{G}_{a}\rightarrow X$
of $X$ which is equivariant for the action of $\mathbb{G}_{a}$ on
$U\times\mathbb{G}_{a}$ by translations on the second factor. This
implies that a geometric quotient exists in the category of algebraic
spaces in the form of an \'etale locally trivial $\mathbb{G}_{a}$-bundle
$\rho:X\rightarrow X/\mathbb{G}_{a}$ over a certain algebraic space
$X/\mathbb{G}_{a}$, the properness of $\mu$ being then equivalent
to the separatedness of $X/\mathbb{G}_{a}$ (see e.g. \cite{Popp1977}).
Now if $X$ is smooth of dimension at most three, then $X/\mathbb{G}_{a}$
is a smooth separated algebraic space of dimension at most two whence
a quasi-projective variety by virtue of Chow's Lemma. Since $\mathbb{G}_{a}$
is a special group, the $\mathbb{G}_{a}$-bundle $\rho:X\rightarrow X/\mathbb{G}_{a}$
is then in fact locally trivial in the Zariski topology on $X/\mathbb{G}_{a}$
which yields the Zariski local triviality of the $\mathbb{G}_{a}$-action
on $X$. 

For $\mathbb{G}_{a}$-actions on higher dimensional affine spaces,
properness fails in general to imply Zariski local triviality and
Zariski local triviality is no longer sufficient to guarantee that
a proper $\mathbb{G}_{a}$-action is a translation. In particular,
starting from dimension $5$, there exists proper triangular $\mathbb{G}_{a}$-actions
which are not Zariski locally trivial \cite{Deveney1995} and proper
triangular, Zariski locally trivial actions with strictly quasi-affine
geometric quotients \cite{Winkelmann1990}. But the question whether
a proper $\mathbb{G}_{a}$-action on $\mathbb{A}^{4}$ is a translation
or at least Zariski locally trivial remains open and very little progress
has been made in the study of these actions during the last decade.
The only existing partial results so far concern triangular $\mathbb{G}_{a}$-actions
: Deveney, van Rossum and the second author \cite{Deveney2004} established
in 2004 that a Zariski locally trivial triangular $\mathbb{G}_{a}$-action
on $\mathbb{A}^{4}$ is in fact a translation. The proof depends on
the very particular structure of the ring of invariants for such actions
and hence cannot be adapted to more general actions. The second positive
result concerns a special type of triangular $\mathbb{G}_{a}$-actions
called \emph{twin-triangular,} corresponding to locally nilpotent
derivations of $\mathbb{C}[x,y,z_{1},z_{2}]$ of the form $\partial=r(x)\partial_{y}+p_{1}(x,y)\partial_{z_{1}}+p_{2}(x,y)\partial z_{2}$
where $r(x)\in\mathbb{C}\left[x\right]$ and $p_{1}(x,y),p_{2}(x,y)\in\mathbb{C}[x,y]$.
It was established by Deveney and the second author \cite{Deveney2002}
that a proper twin-triangular $\mathbb{G}_{a}$-action corresponding
to a derivation for which the polynomial $r(x)$ has simple roots
is a translation. This was accomplished by explicitly computing the
invariant ring $\mathbb{C}[x,y,z_{1},z_{2}]^{\mathbb{G}_{a}}$ and
investigating the structure of the algebraic quotient morphism $\mathbb{A}^{4}\rightarrow\mathbb{A}^{4}/\!/\mathbb{G}_{a}=\mathrm{Spec}(\mathbb{C}[x,y,z_{1},z_{2}]^{\mathbb{G}_{a}})$.
While a result of Daigle and Freudenburg \cite{Daigle1998} gives
finite generation of $\mathbb{C}[x,y,z_{1},z_{2}]^{\mathbb{G}_{a}}$
for arbitrary triangular $\mathbb{G}_{a}$-actions, there is no a
priori bound on the number of its generators, and the simplicity of
the roots of $r(x)$ was crucial to achieve the computation of these
rings.

Here we consider the more general case of twin-triangular actions
of $\mathbb{G}_{a}=\mathbb{G}_{a,X}=\mathbb{G}_{a,\mathbb{C}}\times_{{\rm Spec}(\mathbb{C})}X$
on an affine space $\mathbb{A}_{X}^{3}$ over the spectrum $X$ of
a complex Dedekind domain $A$. Removing in particular the condition
on simplicity of the roots of $r$, we show that a proper $\mathbb{G}_{a}$-action
on $\mathbb{A}_{X}^{3}$ generated by an $A$-derivation of $A[y,z_{1},z_{2}]$
of the form $\partial=r\partial_{y}+p_{1}(y)\partial_{z_{1}}+p_{2}(y)\partial_{z_{2}}$,
$r\in A$, $p_{1},p_{2}\in A[y]$ is a translation, i.e. the geometric
quotient $\mathbb{A}_{X}^{3}/\mathbb{G}_{a}$ is $X$-isomorphic to
$\mathbb{A}_{X}^{2}$ and $\mathbb{A}_{X}^{3}$ is equivariantly isomorphic
to $\mathbb{A}_{X}^{3}/\mathbb{G}_{a}\times_{X}\mathbb{G}_{a}$ where
$\mathbb{G}_{a}$ acts by translations on the second factor. Even
though finite generation of the rings of invariant for triangular
$A$-derivations of $A[y,z_{1},z_{2}]$ holds in this more general
setting thanks to the aforementioned result of Daigle and Freudenburg,
our approach avoids the computation of these rings and focuses more
on the nature of the geometric quotients $\mathbb{A}_{X}^{3}/\mathbb{G}_{a}$.
As noted before, these quotients a priori exist only as separated
algebraic spaces and the crucial step is to show that for the actions
under consideration they are in fact schemes, or, equivalently that
proper twin-triangular $\mathbb{G}_{a}$-actions on $\mathbb{A}_{X}^{3}$
are not only locally trivial in the \'etale topology but also in
the Zariski topology. Indeed, if so then a straightforward generalization
of the aforementioned result of Deveney, van Rossum and the second
author shows that such Zariski locally trivial triangular $\mathbb{G}_{a}$-actions
are in fact translations. 

To explain the main idea of our proof, let us assume for simplicity
that $A=\mathbb{C}\left[x\right]_{(x)}$ and consider a triangular
$A$-derivation $\partial=x^{n}\partial_{y}+p_{1}(y)\partial_{z_{1}}+p_{2}(y,z_{1})\partial_{z_{2}}$
of $A[y,z_{1},z_{2}]$ generating a proper action on $\mathbb{A}_{X}^{3}$
that we denote by $\mathbb{G}_{a,\partial}$. Being triangular, the
action of $\mathbb{G}_{a,\partial}$ commutes with that $\mathbb{G}_{a,\partial_{z_{2}}}$
defined by the partial derivative $\partial_{z_{2}}$ and descends
to an action on $\mathbb{A}_{X}^{2}=\mathrm{Spec}(A[y,z_{1}])\simeq\mathbb{A}_{X}^{3}/\mathbb{G}_{a,\partial_{z_{2}}}$
corresponding with that generated by the derivation $x^{n}\partial_{y}+p_{1}(y)\partial_{z_{1}}$.
Similarly, the action of $\mathbb{G}_{a,\partial_{z_{2}}}$ on $\mathbb{A}_{X}^{3}$
descends to the geometric quotient $\mathbb{A}_{X}^{3}/\mathbb{G}_{a,\partial}$
. These induced actions are in general no longer set-theoretically
free but if we take the quotient of $\mathbb{A}_{X}^{2}$ by $\mathbb{G}_{a,\partial}$
as an algebraic stack $[\mathbb{A}_{X}^{2}/\mathbb{G}_{a,\partial}]$
we obtain a cartesian square \[\xymatrix{ \mathbb{A}^3_X \ar[d]_{\mathrm{pr}_{y,z_1}} \ar[r] & \mathbb{A}^3_X/\mathbb{G}_{a,\partial} \ar[d] \\ \mathbb{A}^2_X \ar[r] & [\mathbb{A}^2_X/\mathbb{G}_{a,\partial}] }\]
which identifies $[\mathbb{A}_{X}^{2}/\mathbb{G}_{a,\partial}]$ with
the algebraic stack quotient $[(\mathbb{A}_{X}^{3}/\mathbb{G}_{a,\partial})/\mathbb{G}_{a,\partial_{z_{2}}}]$.
In this setting, the Zariski local triviality of a proper triangular
$\mathbb{G}_{a}$-action on $\mathbb{A}_{X}^{3}$ becomes equivalent
to the statement that a separated algebraic $X$-space $V$ admitting
a $\mathbb{G}_{a}$-action with algebraic stack quotient $[V/\mathbb{G}_{a}]$
isomorphic to that of a triangular $\mathbb{G}_{a}$-action on $\mathbb{A}_{X}^{2}$
is in fact a scheme. While a direct proof (or disproof) of this equivalent
characterization seems totally out of reach with existing methods,
we establish that it holds at least over suitable $\mathbb{G}_{a,\partial}$-invariant
principal open subsets $U_{1}$ of $\mathbb{A}_{X}^{2}=\mathrm{Spec}(A[y,z_{1}])$
faithfully flat over $X$ and whose algebraic stack quotients $[U_{1}/\mathbb{G}_{a,\partial}]$
are in fact represented by locally separated algebraic spaces $U_{1}/\mathbb{G}_{a,\partial}$.
So this provides at least a $\mathbb{G}_{a,\partial}$-invariant principal
open subset $V_{1}=\mathrm{pr}_{x,z_{1}}^{-1}(U_{1})\simeq U_{1}\times\mathrm{Spec}(\mathbb{C}[z_{2}])$
of $\mathbb{A}_{X}^{3}$, faithfully flat over $X$, and for which
the Zariski open sub-space $V_{1}/\mathbb{G}_{a,\partial}$ of $\mathbb{A}_{X}^{3}/\mathbb{G}_{a,\partial}$
is a scheme. 

This is where twin-triangularity enters the argument: indeed for such
actions the symmetry between the variables $z_{1}$ and $z_{2}$ enables
the same construction with respect to the other projection $\mathrm{pr}_{y,z_{2}}:\mathbb{A}_{X}^{3}\rightarrow\mathbb{A}_{X}^{2}=\mathrm{Spec}(A[y,z_{2}])$
providing a second Zariski open sub-scheme $V_{2}/\mathbb{G}_{a,\partial}$
of $\mathbb{A}_{X}^{3}/\mathbb{G}_{a,\partial}$ faithfully flat over
$X$. Since the action of $\mathbb{G}_{a,\partial}$ is by definition
equivariantly trivial over the complement of the closed point $0$
of $X$, its local triviality in the Zariski topology follows provided
that the invariant affine open subsets $V_{1}$ and $V_{2}$ can be
chosen so that their union covers the closed fiber of $\mathrm{pr}_{X}:\mathbb{A}_{X}^{3}\rightarrow X$.
\\

With this general strategy in mind, the scheme of the proof is fairly
streamlined. In the first section, we describe algebraic spaces that
arise as geometric quotients of certain affine open subsets $U$ of
an affine plane $\mathbb{A}_{X}^{2}$ over a Dedekind domain equipped
with a triangular $\mathbb{G}_{a}$-action. Then we establish the
crucial property that for such affine open subsets $U$, a proper
lift to $U\times\mathbb{A}^{1}$ of the induced $\mathbb{G}_{a}$-action
on $U$ is equivariantly trivial with affine geometric quotient. This
criterion is applied in the second section to deduce that proper twin-triangular
$\mathbb{G}_{a}$-actions on an affine $3$-space $\mathbb{A}_{X}^{3}$
over a complex Dedekind domain are locally trivial in the Zariski
topology.

\section{Preliminaries on triangular $\mathbb{G}_{a}$-actions on an affine
plane over a Dedekind domain }

This section is devoted to the study of certain algebraic spaces that
arise as geometric quotients for triangular $\mathbb{G}_{a}$-actions
on suitably chosen invariant open subsets in $\mathbb{A}_{X}^{2}$. 

\begin{parn} As a motivation for what follows, consider a $\mathbb{G}_{a}$-action
on $\mathbb{A}^{3}=\mathbb{A}^{1}\times\mathbb{A}^{2}={\rm Spec}(\mathbb{C}[x][y,z])$
generated by a triangular derivation $\partial=x^{n}\partial_{y}+p(y)\partial_{z}$
of $\mathbb{C}[x,y,z]$, where $n\geq1$ and where $p(y)\in\mathbb{C}[y]$
is a non constant polynomial. Letting $P(y)\in\mathbb{C}[y]$ be an
integral of $p$, the polynomials $x$ and $t=-x^{n}z+P(y)$ generate
the algebra of invariants $\mathbb{C}[x,y,z]^{\mathbb{G}_{a}}={\rm Ker}\partial$.
Corresponding to the fact that $y/x^{n}$ is a slice for $\partial$
on the principal invariant open subset $\{x\neq0\}$ of $\mathbb{A}^{3}$,
the quotient morphism $q:\mathbb{A}^{3}\rightarrow\mathbb{A}^{3}/\!/\mathbb{G}_{a}={\rm Spec}\left(\mathbb{C}\left[x\right][t]\right)$
restricts to a trivial principal $\mathbb{G}_{a}$-bundle over the
open subset $\left\{ x\neq0\right\} $ of $\mathbb{A}^{3}/\!/\mathbb{G}_{a}$.
In contrast, the set-theoretic fiber of $q$ over a point $(0,t_{0})\in\mathbb{A}^{3}/\!/\mathbb{G}_{a}$
consists of a disjoint union of affine lines in bijection with the
roots of $P(y)-t_{0}$, each simple root corresponding in particular
to an orbit of the action. Thus $\mathbb{A}^{3}/\!/\mathbb{G}_{a}$
is in general far from being even a set-theoretic orbit space for
the action. However, the observation that the inverse image by $q$
of the line $L_{0}=\left\{ x=0\right\} \subset\mathbb{A}^{3}/\!/\mathbb{G}_{a}$
is equivariantly isomorphic to the product $L_{1}\times\mathbb{A}^{1}={\rm Spec}(\mathbb{C}[y][z])$
on which $\mathbb{G}_{a}$ acts via the twisted translation generated
by the derivation $p(y)\partial_{z}$ of $\mathbb{C}[y,z]$ suggests
that a better geometric object approximating an orbit space for the
action should be obtained from $\mathbb{A}^{3}/\!/\mathbb{G}_{a}$
by replacing the line $L_{0}$ by $L_{1}$ , considered as total space
of the finite cover $h_{0}:L_{1}\rightarrow L_{0}$, $y\mapsto t=P(y)$.

On the other hand, on every invariant open subset $V$ of $\mathbb{A}^{3}$
on which the action restricts to a set-theoretically free $\mathbb{G}_{a}$-action,
a geometric quotient $\rho:V\rightarrow V/\mathbb{G}_{a}$ exists
in the form an \'etale locally trivial $\mathbb{G}_{a}$-bundle over
an algebraic space $V/\mathbb{G}_{a}$. By definition of $\partial$,
the fixed points of the $\mathbb{G}_{a}$-action are supported on
the disjoint union of lines $\left\{ x=p(y)=0\right\} $. Therefore,
letting $C_{0}\subset L_{0}={\rm Spec}(\mathbb{C}[t])$ be the complement
of the branch locus of $h_{0}$ and considering $\mathbb{A}^{1}\times C_{0}$
as an open subset of $\mathbb{A}^{3}/\!/\mathbb{G}_{a}$, a geometric
quotient exists on the open subset $V=q^{-1}(\mathbb{A}^{1}\times C_{0})$
of $\mathbb{A}^{3}$. In view of the previous discussion, the algebraic
quotient morphism $q\mid_{V}:V\rightarrow V/\!/\mathbb{G}_{a}\simeq\mathbb{A}^{1}\times C_{0}\subset\mathbb{A}^{3}/\!/\mathbb{G}_{a}$
should thus factor through a $\mathbb{G}_{a}$-bundle $\rho:V\rightarrow V/\mathbb{G}_{a}$
over an algebraic space $V/\mathbb{G}_{a}$ obtained from $\mathbb{A}^{1}\times C_{0}$
by replacing the curve $\left\{ 0\right\} \times C_{0}\simeq C_{0}$
by the finite \'etale cover $h_{0}:C_{1}=h_{0}^{-1}(C_{0})\rightarrow C_{0}$
of itself. 

\end{parn}

In what follows, to give precise sense to the above intuitive interpretation,
we review the construction of a particular type of algebraic space
$\mathfrak{S}$ obtained from a surface by ``replacing a curve by
a finite \'etale cover of itself'' and we check that these spaces
do indeed arise as geometric quotients for $\mathbb{G}_{a}$-actions
on certain affine threefolds. Then, conversely, we characterize effectively
which \'etale locally trivial $\mathbb{G}_{a}$-bundles $\rho:V\rightarrow\mathfrak{S}$
over such spaces have an affine total space.

\subsection{Algebraic space surfaces with an irreducible $r$-fold curve }

\indent\newline\noindent Given a smooth affine curve $X={\rm Spec}(A)$,
a closed point $o\in X$ and a finite \'etale morphism $h_{0}:C_{1}={\rm Spec}(R_{1})\rightarrow C_{0}={\rm Spec}(R_{0})$
between smooth connected affine curves, our aim is to construct an
algebraic space $\mathfrak{S}=\mathfrak{S}(X,o,h_{0})$ which looks
like $X\times C_{0}$ but with the special curve $\left\{ o\right\} \times C_{0}\simeq C_{0}$
replaced by $C_{1}$. To obtain such an $\mathfrak{S}$, one can simply
define it as the quotient of $X\times C_{1}$ by the \'etale equivalence
relation $(x,c_{1})\sim(x',c_{1}')\Leftrightarrow(x=x'\neq0\textrm{ and }h_{0}(c_{1})=h_{0}(c_{1}'))$.
More formally, letting $X_{*}=X\setminus\left\{ o\right\} $, this
means that $\mathfrak{S}=X\times C_{1}/R$ where 
\[
\mathrm{diag}\sqcup j:R=X\times C_{1}\sqcup(X\times C_{1})\times_{X_{*}\times C_{0}}(X\times C_{1})\setminus\mathrm{Diag}\longrightarrow(X\times C_{1})\times(X\times C_{1})
\]
is the \'etale equivalence relation defined by the diagonal embedding
$\mathrm{diag}:X\times C_{1}\rightarrow(X\times C_{1})\times(X\times C_{1})$
and the natural immersion $j:(X\times C_{1})\times_{X_{*}\times C_{0}}(X\times C_{1})\setminus\mathrm{Diag}\rightarrow(X\times C_{1})\times(X\times C_{1})$
respectively. This equivalence relation restricts on the invariant
open subset $X_{*}\times C_{1}$ to that defined by the diagonal embedding
$X_{*}\times C_{1}\rightarrow(X_{*}\times C_{1})\times_{X_{*}\times C_{0}}(X_{*}\times C_{1})$
which has quotient $X\times C_{0}$. This implies that the $R$-invariant
morphism $\mathrm{pr}_{1}\times h_{0}:X\times C_{1}\rightarrow X\times C_{0}$
descends to a morphism $\overline{\varphi}:\mathfrak{S}\rightarrow X\times C_{0}$
restricting to an isomorphism over $X_{*}\times C_{0}$. In contrast,
since $R$ induces the trivial equivalence relation on $\{o\}\times C_{1}\simeq C_{1}$,
one has $\overline{\varphi}^{-1}(\{o\}\times C_{0})\simeq C_{1}$
as desired.

A disadvantage of this simple presentation of $\mathfrak{S}$ is that
the equivalence relation $R$ is quasi-finite but not finite. To construct
an alternative presentation of $\mathfrak{S}$ as a quotient of a
suitable scheme $Z$ by a finite \'etale equivalence relation, in
fact by a free action of a finite group $G$, we proceed as follows: 

\begin{parn} \label{par:alg-space-const} We let $C={\rm Spec}(R)$
be the normalization of $C_{0}$ in the Galois closure of the field
extension ${\rm Frac}(R_{0})\hookrightarrow{\rm Frac}(R_{1})$. By
construction, the induced morphism $h:C\rightarrow C_{0}$ is a torsor
under the corresponding Galois group $G$ which factors as $h:C\stackrel{h_{1}}{\rightarrow}C_{1}\stackrel{h_{0}}{\rightarrow}C_{0}$
where $h_{1}:C\rightarrow C_{1}$ is a torsor under a certain subgroup
$H$ of $G$ with index equal to the degree $r$ of the finite morphism
$h_{0}$. Now we let $Z$ be the scheme obtained by gluing $r$ copies
$Z_{\overline{g}}$, $\overline{g}\in G/H$, of $X\times C$ by the
identity outside the curves $\{o\}\times C\subset Z_{\overline{g}}$.
The group $G$ acts freely on $Z$ by $Z_{\overline{g}}\ni(x,t)\mapsto g'\cdot(x,t)=(x,g'\cdot t)\in Z_{\overline{g'\cdot g}}$
and so a geometric quotient $\xi:Z\rightarrow\mathfrak{S}=Z/G$ exists
in the category of algebraic spaces in the form of an \'etale $G$-torsor
over an algebraic space $\mathfrak{S}$. The local morphisms ${\rm pr}_{1}\times h:Z_{\overline{g}}\simeq X\times C\rightarrow X\times C_{0}$,
$\overline{g}\in G/H$, glue to a global $G$-invariant morphism $\varphi:Z\rightarrow X\times C_{0}$
which descends in turn to a morphism $\overline{\varphi}:\mathfrak{S}=Z/G\rightarrow X\times C_{0}$
restricting to an isomorphism outside $\{o\}\times C_{0}$. In contrast,
$\overline{\varphi}^{-1}(\{o\}\times C_{0})$ is isomorphic as a scheme
over $C_{0}$ to the quotient of $C\times(G/H)$ by the diagonal action
of $G$ whence to $C/H\simeq C_{1}$. 

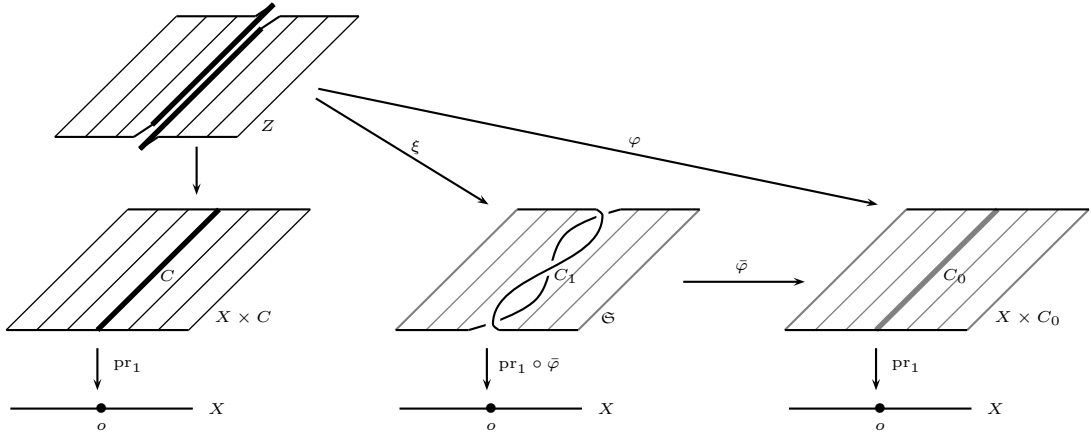
\begin{figure}[ht]
\psset{unit=0.8}
\begin{pspicture}(12,-3)(10,8)
\rput(1,4){\usebox{\XZ}}
\psline{->}(4,3.8)(4,2.8)
\rput(0,0){\usebox{\XC}}
\rput(8,0){\usebox{\Xspace}}
\rput(16,0){\usebox{\XCzero}}
\psline{->}(6.45,4.8)(10,2.6)
\rput(8.5,3.8){{\scriptsize $\xi$}}
\psline{->}(6.5,5)(18,2.6)
\rput(13,3.9){{\scriptsize $\varphi$}}
\psline{->}(14,1)(16.5,1)
\rput(15.2,1.3){{\scriptsize $\bar{\varphi}$}}
\end{pspicture}
\caption{Construction of $\mathfrak{S}$ as a quotient of $Z$ by a finite group action}
\end{figure}

\noindent The fact that the algebraic spaces $\mathfrak{S}=Z/G$
obtained by this construction coincide with the $X\times C_{1}/R$
defined above can be seen as follows. By construction, every open
subset $Z_{\overline{g}}\simeq X\times C$ of $Z$, $\overline{g}\in G/H$,
is invariant under the induced action of $H$, with quotient $Z_{\overline{g}}/H\simeq X\times C/H=X\times C_{1}$.
So the morphism $X\times C\rightarrow\mathfrak{S}$ induced by restricting
$\xi:Z\rightarrow\mathfrak{S}$ to any open subset $Z_{\overline{g}}\subset Z$
descends to an \'etale morphism $X\times C_{1}=X\times C/H\rightarrow\mathfrak{S}$,
and one checks that the \'etale equivalence relation $(\mathrm{pr}_{1},\mathrm{pr}_{2}):(X\times C_{1})\times_{\mathfrak{S}}(X\times C_{1})\rightrightarrows X\times C_{1}$
is precisely isomorphic to that $R\rightrightarrows X\times C_{1}$
defined above. 

\end{parn}
\begin{rem}
Note that if $h_{0}:C_{1}\rightarrow C_{0}$ is a not an isomorphism
then $\mathfrak{S}$ cannot be a scheme. Indeed, otherwise the image
by $\xi$ of a point $z_{0}\in\left\{ o\right\} \times C\subset Z_{\overline{g}}\subset Z$
for some $\overline{g}\in G/H$ would have a Zariski open affine neighborhood
$U$ in $\mathfrak{S}_{h_{0}}$. But then since $\xi:Z\rightarrow\mathfrak{S}$
is a finite morphism, $\xi^{-1}(U)$ would be a $G$-invariant affine
open neighborhood of $z_{0}$ in $Z$, which is absurd as such a point
does not even have a separated open neighborhood in $Z$. 
\end{rem}

\subsection{Geometric quotients for restricted triangular $\mathbb{G}_{a}$-actions
on a relative affine plane}

\indent\newline\noindent Here we show that the algebraic spaces $\mathfrak{S}=\mathfrak{S}(X,o,h_{0})$
described in the previous subsection naturally arise as geometric
quotients for $\mathbb{G}_{a}$-actions on certain open subsets of
affine planes over discrete valuation rings. 

\begin{parn} \label{par:Open-subs-A3} We let $X={\rm Spec}(A)$
be the spectrum of a discrete valuation ring with uniformizing parameter
$x$ and with residue field $\mathbb{C}$. We denote by $o$ its closed
point and we let $\mathbb{A}_{X}^{2}={\rm Spec}(A\left[y,z\right])$.
Given an irreducible triangular locally nilpotent $A$-derivation
$\partial=x^{n}\partial_{y}+p\left(y\right)\partial_{z}$ of $A\left[y,z\right]$,
where $p(y)\in A\left[y\right]$, we let $P\left(y\right)\in A\left[y\right]$
be a integral of $p\left(y\right)$. Since $\partial$ is irreducible,
$p\left(y\right)$ is not divisible by $x$ and so the restriction
$\overline{P}$ of the morphism $P:\mathbb{A}_{X}^{1}={\rm Spec}(A[y])\rightarrow\mathbb{A}_{X}^{1}={\rm Spec}(A[t])$
over the closed point of $X$ is not constant. Its branch locus is
a principal divisor ${\rm div}\left(\alpha\right)$ for a certain
$\alpha\in\mathbb{C}\left[t\right]$ and we let $C_{\partial}={\rm Spec}(R_{0})$,
where $R_{0}=\mathbb{C}\left[t\right]_{\alpha}$, be its complement.
The polynomial $-x^{n}z+P\left(y\right)\in A\left[y,z\right]$ defines
a $\mathbb{G}_{a}$-invariant $X$-morphism $f:\mathbb{A}_{X}^{2}={\rm Spec}(A\left[y,z\right])\rightarrow{\rm Spec}\left(A\left[t\right]\right)$,
smooth over $X\times C_{\partial}$, and such that the induced $\mathbb{G}_{a}$-action
on $V_{\partial}=f^{-1}\left(X\times C_{\partial}\right)\subset\mathbb{A}_{X}^{2}$
is set-theoretically free. Thus a geometric quotient exists in the
category of algebraic spaces in the form of an \'etale locally trivial
$\mathbb{G}_{a}$-bundle $\rho:V_{\partial}\rightarrow V_{\partial}/\mathbb{G}_{a}$.
Clearly, the curve $C_{1}={\rm Spec}(R_{0}\left[y\right]/\left(P(0,y)-t\right))$
is smooth and irreducible, and the induced morphism $h_{0}:C_{1}\rightarrow C_{\partial}$
is finite and \'etale. With the notation of $\S$ \ref{par:alg-space-const}
above, we have the following result:

\end{parn} 
\begin{prop}
\label{prop:Restricted-quotient}The algebraic space quotient $V_{\partial}/\mathbb{G}_{a}$
is isomorphic to $\mathfrak{S}(X,o,h_{0})$.\end{prop}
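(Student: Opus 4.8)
The plan is to exhibit an explicit isomorphism between $V_\partial/\mathbb{G}_a$ and the algebraic space $\mathfrak{S}(X,o,h_0)$ by comparing both as quotients of a common \'etale atlas, rather than by a direct map of quotients. The key observation is that over the principal open subset $X_* = X \setminus \{o\}$ the derivation $\partial = x^n\partial_y + p(y)\partial_z$ admits the slice $y/x^n$, so the restricted $\mathbb{G}_a$-action on $V_\partial \cap \mathrm{pr}_X^{-1}(X_*)$ is equivariantly trivial with geometric quotient isomorphic to $X_* \times C_\partial$, matching the description of $\mathfrak{S}$ over $X_*$. Thus the entire content of the proposition is concentrated over the closed point $o$, where the slice degenerates.

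First I would analyze the closed fiber. Setting $x = 0$, the derivation restricts to $p(0,y)\partial_z$ on $\mathbb{C}[y,z]$, so $f^{-1}(\{o\}\times C_\partial)$ is $\mathbb{G}_a$-equivariantly isomorphic to $\mathrm{Spec}(R_0[y,z]/(P(0,y)-t))$ where the action is the twisted translation generated by $p(0,y)\partial_z$. Over $C_\partial$ the branch locus of $\overline{P}$ has been removed, so $p(0,y)$ is nowhere zero on this locus and the action here is set-theoretically free; its geometric quotient is exactly the curve $C_1 = \mathrm{Spec}(R_0[y]/(P(0,y)-t))$ with its finite \'etale map $h_0 : C_1 \to C_\partial$. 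This identifies the closed fiber of $\rho$ with $C_1$, matching $\overline{\varphi}^{-1}(\{o\}\times C_0) \simeq C_1$ in the construction of $\S\ref{par:alg-space-const}$.

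Next I would produce the \'etale atlas realizing the identification globally. The natural candidate is to pull back the Galois cover $h : C \to C_0$ and build an \'etale $\mathbb{G}_a$-chart over $X\times C$; concretely one lifts the slice construction to each copy $Z_{\overline{g}} \simeq X\times C$ appearing in the gluing defining $Z$, using that $\mathbb{G}_a$ is a special group so that the \'etale-locally trivial bundle $\rho$ trivializes over an \'etale cover refining $X\times C \to X\times C_0$. The plan is to check that the \'etale equivalence relation on $X\times C_1$ induced by the bundle map $\rho$ (via the slice over $X_*$ and the degeneration over $o$) coincides with the relation $R \rightrightarrows X\times C_1$ presenting $\mathfrak{S}$; this is the same relation that glues the diagonal over all of $X\times C_1$ with the fiber-product identification over $X_*\times C_0$ alone. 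Because both $V_\partial/\mathbb{G}_a$ and $\mathfrak{S}$ are then expressed as quotients of $X\times C_1$ by the identical \'etale equivalence relation, they are canonically isomorphic, and the isomorphism is compatible with $\overline{\varphi}$ and the quotient of $f$.

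The main obstacle I expect is the precise matching of the two \'etale equivalence relations at the closed point $o$, where the slice $y/x^n$ blows up. One must verify that the gluing data coming from the $\mathbb{G}_a$-bundle structure of $\rho$ degenerates over $o$ to exactly the finite cover $h_0$ and not, say, to some nonreduced or differently-branched thickening; this requires carefully tracking how the invariant $t = -x^n z + P(y)$ behaves along the exceptional curve and confirming that the local sections defining the \'etale charts patch via the Galois group $G$ in the manner prescribed by the gluing of the $Z_{\overline{g}}$. Establishing that the resulting relation is \emph{finite} after passing to the $Z$-presentation (rather than merely quasi-finite) is the technical heart, and it is exactly what the two presentations of $\mathfrak{S}$ in $\S\ref{par:alg-space-const}$ were set up to reconcile.
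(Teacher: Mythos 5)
Your overall architecture matches the paper's: pass to the Galois closure $h:C\to C_\partial$, work $G$-equivariantly, and identify the two quotients by comparing \'etale presentations. Your computations of the quotient over $X_*$ (via the slice $y/x^n$) and of the quotient of the closed fiber (the twisted translation $\overline{p}(y)\partial_z$, giving $C_1$) are both correct. But the proof has a genuine gap exactly at the point you flag as ``the main obstacle'' and then defer: knowing the quotient over $X_*$ and over $\{o\}$ separately does not determine the algebraic space $V_\partial/\mathbb{G}_a$, because the whole content is in how these two pieces are glued infinitesimally along $x=0$. Moreover, your plan to ``check that the \'etale equivalence relation on $X\times C_1$ induced by $\rho$ coincides with $R$'' presupposes an \'etale map $X\times C_1\to V_\partial/\mathbb{G}_a$, i.e.\ a family of local sections of $\rho$, which you never construct; the appeal to $\mathbb{G}_a$ being special does not produce it, since the base of $\rho$ is precisely the unknown algebraic space and the relevant atlas ($Z$, a non-separated gluing of $r$ copies of $X\times C$) is not an arbitrary \'etale refinement of $X\times C\to X\times C_0$.

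What closes this gap in the paper is a concrete algebraic input you are missing: after base change to $A\otimes_{\mathbb{C}}R$, the roots $t_{\overline{g}}$ of $\overline{P}(y)-t$ are lifted to elements $\sigma_{\overline{g}}\in A\otimes_{\mathbb{C}}R$, permuted by $G$, such that
\[
P(y)-t=S_{1}(y)\prod_{\overline{g}\in G/H}(y-\sigma_{\overline{g}})+x^{n}S_{2}(y)
\]
with $S_1$ invertible modulo $x$ (an approximate factorization to order $x^n$, as in Theorem 3.2 of the Dubouloz--Poloni reference). This realizes $W=V_\partial\times_{C_\partial}C$ as the hypersurface $x^nz=S_1(y)\prod_{\overline{g}}(y-\sigma_{\overline{g}})$ and yields explicit local slices $u_{\overline{g}}=x^{-n}(y-\sigma_{\overline{g}})$ exhibiting $W$ as a $G$-equivariant $\mathbb{G}_a$-bundle over $Z$ with transition functions $u_{\overline{g}}\mapsto u_{\overline{g}}+x^{-n}(\sigma_{\overline{g}}-\sigma_{\overline{g}'})$; descending by $G$ then gives $V_\partial/\mathbb{G}_a\simeq Z/G=\mathfrak{S}$. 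Without lifting the roots to order $n$, the transition data along $x=0$ --- hence the isomorphism type of the quotient space --- is not pinned down, which is precisely the ``differently-branched thickening'' you worry about. So the proposal describes the right skeleton but omits the key lemma that makes the identification work.
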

\begin{proof}
Again, we let $h:C={\rm Spec}(R)\rightarrow C_{\partial}$ be the
Galois closure of the finite \'etale morphism $h_{0}:C_{1}\rightarrow C_{\partial}$.
By construction, the polynomial $\overline{P}(y)-t\in R\left[y\right]$
splits as $\overline{P}(y)-t=\prod_{\overline{g}\in G/H}(y-t_{\overline{g}})$
for certain elements $t_{\overline{g}}\in R$, $\overline{g}\in G/H$,
on which the Galois group $G$ acts by permutation. Furthermore, since
$h_{0}:C_{1}\rightarrow C_{\partial}$ is \'etale, it follows that
for distinct $\overline{g},\overline{g}'\in G/H$, one has $t_{\overline{g}}(c)\neq t_{\overline{g'}}(c)$
for every point $c\in C$. Now a similar argument as in the proof
of Theorem 3.2 in \cite{Dubouloz2009} implies that there exists a
collection of elements $\sigma_{\overline{g}}\in A\otimes_{\mathbb{C}}R$
with respective residue classes $t_{\overline{g}}\in R$ modulo $x$,
$\overline{g}\in G/H$, on which $G$ acts by permutation, a polynomial
$S_{1}\in A\otimes_{\mathbb{C}}R\left[y\right]$ with invertible residue
class modulo $x$ and a polynomial $S_{2}\in A\otimes_{\mathbb{C}}R\left[y\right]$
such that in $A\otimes_{\mathbb{C}}R\left[y\right]$ one can write
\[
P(y)-t=S_{1}(y)\prod_{\overline{g}\in G/H}(y-\sigma_{\overline{g}})+x^{n}S_{2}(y).
\]
This implies that\emph{ $W=V_{\partial}\times_{C_{\partial}}C\simeq{\rm Spec}\left(A\otimes_{\mathbb{C}}R\left[y,z\right]/(x^{n}z-P(y)+t\right)$
}is isomorphic to the sub-variety of $C\times\mathbb{A}_{X}^{2}$
defined by the equation $x^{n}z=\tilde{P}\left(y\right)=S_{1}(y)\prod_{\overline{g}\in G/H}(y-\sigma_{\overline{g}})$.\emph{
}Furthermore, the $\mathbb{G}_{a}$-action of $V_{\partial}$ lifts
to the set-theoretically free $\mathbb{G}_{a}$-action on $W$ commuting
with that of $G$ associated with the locally nilpotent $A\otimes_{\mathbb{C}}R$-derivation
$x^{n}\partial_{y}+\partial_{y}(\tilde{P}(y))\partial_{z}$. Then
a standard argument (see e.g. \emph{loc. cit.} or \cite{Dubouloz2011b})
shows that the $\mathbb{G}_{a}$-invariant morphism ${\rm pr}_{X,C}:W\rightarrow X\times C$
factors through a $G$-equivariant $\mathbb{G}_{a}$-bundle $\eta:W\rightarrow Z$
over the scheme $Z$ as in $\S$ \ref{par:alg-space-const} above with
local trivializations $W\mid_{Z_{\overline{g}}}\simeq Z_{\overline{g}}\times{\rm Spec}(\mathbb{C}[u_{\overline{g}}])$,
where $u_{\overline{g}}=x^{-n}(y-\sigma_{\overline{g}})$, $\overline{g}\in G/H$,
and transition isomorphisms over $Z_{\overline{g}}\cap Z_{\overline{g}'}\simeq{\rm Spec}(A_{x}\otimes_{\mathbb{C}}R)$
of the form $u_{\overline{g}}\mapsto u_{\overline{g}'}=u_{\overline{g}}+x^{-n}(\sigma_{\overline{g}}-\sigma_{\overline{g}'})$
for every pair of distinct elements $\overline{g},\overline{g}'\in G/H$.
By construction, we have a cartesian square \[\xymatrix{ W \ar[r] \ar[d]_{\eta} & V_\partial \simeq V/G \ar[d]^{\rho} \\ Z \ar[r] & \mathfrak{S}=Z/G,}\]
where the horizontal arrows are $G$-torsors and the vertical ones
are $\mathbb{G}_{a}$-bundles, which provides, by virtue of the universal
property of categorical quotients, an isomorphism of algebraic spaces
$V_{\partial}/\mathbb{G}_{a}\simeq\mathfrak{S}=\mathfrak{S}(X,o,h_{0})$.
\end{proof}

\subsection{Criteria for affineness}

\indent\newline\noindent Even though Proposition \ref{prop:Restricted-quotient}
shows in particular that algebraic spaces of the form $\mathfrak{S}=\mathfrak{S}(X,o,h_{0})$
may arise as geometric quotient for $\mathbb{G}_{a}$-actions on affine
schemes, the total space of an \'etale locally trivial $\mathbb{G}_{a}$-bundle
$\rho:V\rightarrow\mathfrak{S}$ is in general neither a scheme nor
even a separated algebraic space. However it is possible to characterize
effectively which $\mathbb{G}_{a}$-bundles $\rho:V\rightarrow\mathfrak{S}$
have affine total space. 

\begin{parn} Indeed, with the notation of $\S$ \ref{par:alg-space-const}
above, since $X\times C_{0}$ is affine, the affineness of $V$ is
equivalent to that of the morphism $\overline{\varphi}\circ\rho:V\rightarrow X\times C_{0}$.
Furthermore, since $\rho:V\rightarrow\mathfrak{S}$ is an affine morphism
and $\overline{\varphi}:\mathfrak{S}\rightarrow X\times C_{0}$ is
an isomorphism outside the curve $\{o\}\times C_{0}$, it is enough
to consider the case that $X={\rm Spec}\left(A\right)$ is the spectrum
of a discrete valuation ring with closed point $o$ and uniformizing
parameter $x$. Every $\mathbb{G}_{a}$-bundle $\rho:V\rightarrow\mathfrak{S}$
pulls-back via the Galois cover $\xi:Z\rightarrow\mathfrak{S}=Z/G$
to a $G$-equivariant $\mathbb{G}_{a}$-bundle $\eta={\rm pr}_{2}:W=V\times_{\mathfrak{S}}Z\rightarrow Z$.
By construction of $Z$, the latter becomes trivial on the canonical
covering $\mathcal{U}$ of $Z$ by the affine open subsets $Z_{\overline{g}}\simeq X\times C$,
$\overline{g}\in G/H$, whence is determined up to isomorphism by
a $G$-equivariant \v{C}ech $1$-cocyle 
\[
\{f_{\overline{g}\overline{g}'}\}\in C^{1}(\mathcal{U},\mathcal{O}_{Z})\simeq\bigoplus_{\overline{g},\overline{g}'\in G/H,\overline{g}\neq\overline{g}'}A_{x}\otimes_{\mathbb{C}}R.
\]
With this notation, we have the following criterion: 

\end{parn}
\begin{thm}
\label{thm:Aff-criterion} For a $\mathbb{G}_{a}$-bundle $\rho:V\rightarrow\mathfrak{S}$,
the following are equivalent:

a) $V$ is a separated algebraic space,

b) For every every pair of distinct elements $\overline{g},\overline{g}'\in G/H$,
there exists an element $\tilde{f}_{\overline{g}\overline{g}'}\in A\otimes_{\mathbb{C}}R$
with invertible residue class modulo $x$ such that $f_{\overline{g}\overline{g}'}=x^{-l}\tilde{f}_{\overline{g}\overline{g}'}$
for a certain $l>1$. 

c) $V$ is an affine scheme.\end{thm}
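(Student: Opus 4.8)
The plan is to establish the cycle of implications $c)\Rightarrow a)\Rightarrow b)\Rightarrow c)$, with the bulk of the genuine work concentrated in $b)\Rightarrow c)$. The implication $c)\Rightarrow a)$ is immediate, since every affine scheme is a separated algebraic space. So the substance lies in analyzing the $G$-equivariant cocycle $\{f_{\overline{g}\overline{g}'}\}\in\bigoplus A_x\otimes_{\mathbb{C}}R$ that determines the bundle $\eta:W\to Z$ and in relating the pole order of its entries along the special curves $\{o\}\times C$ to the geometry of $V$.

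For $a)\Rightarrow b)$, I would argue by contraposition, exploiting the non-separatedness of $Z$ itself along the glued curves $\{o\}\times C\subset Z_{\overline{g}}$. Write each entry as $f_{\overline{g}\overline{g}'}=x^{-l}\tilde f_{\overline{g}\overline{g}'}$ with $\tilde f_{\overline{g}\overline{g}'}\in A\otimes_{\mathbb{C}}R$ and $l\geq 0$ chosen minimal; the transition law $u_{\overline{g}'}=u_{\overline{g}}+f_{\overline{g}\overline{g}'}$ glues the fibers of $\eta$ over the open sets $Z_{\overline{g}}\cap Z_{\overline{g}'}\simeq\mathrm{Spec}(A_x\otimes_{\mathbb{C}}R)$. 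The point is that two distinct lifts in $W$ of a point on the doubled curve $\{o\}\times C$ fail to be separated in $W$ precisely when the transition function does not blow up fast enough to push them apart in the limit $x\to 0$. Concretely, if some $f_{\overline{g}\overline{g}'}$ has $l\leq 0$ (a regular transition) or $l=1$ with $\tilde f_{\overline{g}\overline{g}'}$ vanishing modulo $x$ (which is the case $l=1$ excluded in b)), one produces two sections of $\eta$ over a punctured neighborhood whose limit points in the two charts $Z_{\overline{g}}$, $Z_{\overline{g}'}$ of $W$ cannot be separated by disjoint opens, so $W$ is non-separated; since $W\to V$ is a $G$-torsor hence separated and surjective, $V$ is non-separated too. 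This forces $l>1$ with $\tilde f_{\overline{g}\overline{g}'}$ invertible modulo $x$, which is exactly b). The $G$-equivariance of the cocycle guarantees the condition is uniform across the orbit, so it suffices to analyze one pair.

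For the decisive implication $b)\Rightarrow c)$, the strategy is to produce enough global regular functions on $W$ to realize it as an affine scheme, then descend affineness through the finite torsor $\xi$. Assuming b), on each chart $Z_{\overline{g}}\times\mathrm{Spec}(\mathbb{C}[u_{\overline{g}}])$ I would consider the function $w_{\overline{g}}=x^{l}u_{\overline{g}}$, or more precisely a suitable $\tilde f_{\overline{g}\overline{g}'}$-adjusted combination. The invertibility of $\tilde f_{\overline{g}\overline{g}'}$ modulo $x$ and the condition $l>1$ ensure that the naive fiber coordinate $u_{\overline{g}}$, together with $x$ and the coordinates pulled back from $X\times C$, generates a coordinate ring on which the transition relations $u_{\overline{g}'}-u_{\overline{g}}=x^{-l}\tilde f_{\overline{g}\overline{g}'}$ become polynomial identities after clearing the pole; this exhibits $W$ as a closed subscheme of an affine space over $X\times C$, cut out by equations of the shape $x^{l}(u_{\overline{g}'}-u_{\overline{g}})=\tilde f_{\overline{g}\overline{g}'}$ glued across the charts. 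Because these relations recover the missing special fiber as an honest (affine) fiber rather than a doubling, $W$ is an affine scheme. Finally, $V=W/G$ is the quotient of an affine scheme by the free action of the finite group $G$, hence affine, giving c).

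The main obstacle I anticipate is the implication $b)\Rightarrow c)$, and within it the precise bookkeeping that shows the condition $l>1$ (rather than merely $l\geq 1$) is what is needed to glue the fiber coordinates into a \emph{globally regular} affine chart across the non-separated locus of $Z$. The subtlety is that $Z$ is itself non-separated along $\{o\}\times C$, so one cannot simply invoke affineness of $Z$; instead the blowing-up of the transition functions must exactly compensate for the doubling of the special curves, and it is the strict inequality $l>1$ together with invertibility of $\tilde f_{\overline{g}\overline{g}'}$ modulo $x$ that separates the two charts of $W$ lying over the doubled curve while keeping the glued coordinate regular. Verifying that the resulting ring is finitely generated and that $\mathrm{Spec}$ of it recovers $W$ as a scheme (not merely as an algebraic space) is where the argument must be carried out with care; the reference to the analogous computation in \cite{Dubouloz2009} should streamline this step.
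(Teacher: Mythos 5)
Your skeleton coincides with the paper's: both arguments pass to the pullback $W=V\times_{\mathfrak{S}}Z$ along the finite \'etale $G$-torsor $\xi:Z\rightarrow\mathfrak{S}$, transfer separatedness from $V$ to $W$ using the properness of the $G$-torsor $\mathrm{pr}_{1}:W\rightarrow V$, and descend affineness back via $V\simeq W/G\simeq{\rm Spec}(\Gamma(W,\mathcal{O}_{W})^{G})$. The difference is that the paper does not prove the two nontrivial equivalences on $W$ at all: it quotes \cite{Dubouloz2004a} (Proposition 10.1.2 and Lemma 10.1.3 for ``b) $\Leftrightarrow$ $W$ separated'', and Theorem 10.2.1, a generalization of Fieseler's criterion \cite{Fieseler1994}, for ``$W$ separated $\Leftrightarrow$ $W$ affine''). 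You attempt to prove both from scratch, and this is where your argument has a genuine gap.

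Your step b) $\Rightarrow$ c) is not actually carried out. The $u_{\overline{g}}$ are coordinates on the individual charts $W\mid_{Z_{\overline{g}}}$ only, so the relation $x^{l}(u_{\overline{g}'}-u_{\overline{g}})=\tilde{f}_{\overline{g}\overline{g}'}$ is an identity on the overlap $\{x\neq0\}$ and cannot serve as a defining equation of a closed immersion of $W$ into an affine space over $X\times C$: for that one must first produce genuinely \emph{global} regular functions on $W$ (in the model case of Proposition \ref{prop:Restricted-quotient} these are $y=x^{n}u_{\overline{g}}+\sigma_{\overline{g}}$ and $z$, giving the closed embedding $x^{n}z=S_{1}(y)\prod_{\overline{g}}(y-\sigma_{\overline{g}})$), and verifying that such functions exist and separate the $r$ mutually non-separated charts is precisely the content of the cited Fieseler-type criterion, not a consequence of ``clearing the pole.'' Relatedly, you lean on the strict inequality $l>1$ as the decisive hypothesis; but the cocycle of the affine scheme $V_{\partial}$ in Proposition \ref{prop:Restricted-quotient} is $x^{-n}(\sigma_{\overline{g}}-\sigma_{\overline{g}'})$ with $n\geq1$ arbitrary, so the correct threshold is $l\geq1$ together with invertibility of $\tilde{f}_{\overline{g}\overline{g}'}$ modulo $x$ (the ``$l>1$'' in the statement is a slip), and an argument that genuinely needs $l>1$ cannot be correct. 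Your a) $\Rightarrow$ b) sketch (non-separated limits of sections when the pole is absent or the residue class vanishes somewhere on $C$) is the standard and correct idea and matches what the cited references establish.
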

\begin{proof}
By virtue of \cite[Proposition 10.1.2 and Lemma 10.1.3 ]{Dubouloz2004a},
b) is equivalent to the separatedness of the total space of the $\mathbb{G}_{a}$-bundle
$\eta:W\rightarrow Z$ and this property is also equivalent to the
affineness of $W$ thanks to the generalization of the so-called Fieseler
criterion for affineness \cite{Fieseler1994} established in \cite[Theorem 10.2.1]{Dubouloz2004a}.
Now if $V$ is a separated algebraic space then so is $W=V\times_{\mathfrak{S}}Z$
as the projection ${\rm pr}_{1}:W\rightarrow V$ is a $G$-torsor
whence a proper morphism. Thus $W$ is in fact an affine scheme and
so $V\simeq W/G\simeq{\rm Spec}(\Gamma(W,\mathcal{O}_{W})^{G})$ is
an affine scheme as well.
\end{proof}
\begin{parn} Given a $\mathbb{G}_{a}$-bundle $\rho:V\rightarrow\mathfrak{S}$
with affine total space $V$, we have a one-to-one correspondence
between $\mathbb{G}_{a}$-bundles over $\mathfrak{S}$ and lifts of
the $\mathbb{G}_{a}$-action on $V$ to $V\times\mathbb{A}^{1}$.
Indeed, if $\rho':V'\rightarrow\mathfrak{S}$ is another $\mathbb{G}_{a}$-bundle
then the fiber product $V'\times_{\mathfrak{S}}V$ is a $\mathbb{G}_{a}$-bundle
over $V$ via the second projection, whence is isomorphic to the trivial
one $V\times\mathbb{A}^{1}$ on which $\mathbb{G}_{a}$ acts by translation
on the second factor. Via this isomorphism, the natural lift to $V'\times_{\mathfrak{S}}V$
of the $\mathbb{G}_{a}$-action on $V$ defined by $t\cdot\left(v',v\right)=(v',t\cdot v)$
coincides with a lift of it to $V\times\mathbb{A}^{1}$ with geometric
quotient $V\times\mathbb{A}^{1}/\mathbb{G}_{a}\simeq V'$. Conversely,
since every lift to $V\times\mathbb{A}^{1}$ of the $\mathbb{G}_{a}$-action
on $V$ commutes with that by translations on the second factor, the
equivariant projection ${\rm pr}_{1}:V\times\mathbb{A}^{1}\rightarrow V$
descends to a $\mathbb{G}_{a}$-bundle $\rho':V'=V\times\mathbb{A}^{1}/\mathbb{G}_{a}\rightarrow\mathfrak{S}=V/\mathbb{G}_{a}$
fitting into a cartesian square \[\xymatrix{V\times \mathbb{A}^1 \ar[d]_{{\rm pr}_1} \ar[r] & V'=V\times \mathbb{A}^1/\mathbb{G}_a \ar[d]^{\rho'} \\ V \ar[r]^{\rho} & \mathfrak{S}=V/\mathbb{G}_a} \] of
$\mathbb{G}_{a}$-bundles. In this diagram the horizontal arrows correspond
to the $\mathbb{G}_{a}$-actions on $V$ and its lift to $V\times\mathbb{A}^{1}$
while the vertical ones correspond to the actions on $V\times\mathbb{A}^{1}$
by translations on the second factor and the one it induces on $V\times\mathbb{A}^{1}/\mathbb{G}_{a}$.
Combined with Theorem \ref{thm:Aff-criterion}, this correspondence
leads to the following criterion:

\end{parn}
\begin{cor}
\label{cor:Affine-extended-quotient} Let $\rho:V\rightarrow\mathfrak{S}$
be a $\mathbb{G}_{a}$-bundle with affine total space over an algebraic
space $\mathfrak{S}$ as in $\S$ \ref{par:alg-space-const}. Then the
total space of a $\mathbb{G}_{a}$-bundle $\rho':V'\rightarrow\mathfrak{S}$
is an affine scheme if and only if the corresponding lifted $\mathbb{G}_{a}$-action
on $V\times\mathbb{A}^{1}$ is proper. \end{cor}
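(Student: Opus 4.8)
The plan is to chain together two equivalences: the general equivalence between properness of a set-theoretically free $\mathbb{G}_{a}$-action and separatedness of its geometric quotient, and the collapse of ``separated algebraic space'' to ``affine scheme'' furnished by Theorem \ref{thm:Aff-criterion}. The upshot is that the corollary follows formally once one identifies the geometric quotient of the lifted action with the bundle $V'$.

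First I would observe that the lifted $\mathbb{G}_{a}$-action on $V\times\mathbb{A}^{1}$ associated to $\rho'$ is set-theoretically free. Indeed, by construction of the correspondence recalled just before the statement, this lift commutes with the translation action on the second factor, and the equivariant projection $\mathrm{pr}_{1}:V\times\mathbb{A}^{1}\rightarrow V$ intertwines it with the action on $V$; since $\rho:V\rightarrow\mathfrak{S}$ is a $\mathbb{G}_{a}$-bundle the latter action is free, whence so is the lift. Consequently the lifted action admits a geometric quotient in the category of algebraic spaces in the form of an \'etale locally trivial $\mathbb{G}_{a}$-bundle, and by the cartesian square preceding the corollary this quotient is precisely the bundle $\rho':V'=V\times\mathbb{A}^{1}/\mathbb{G}_{a}\rightarrow\mathfrak{S}$.

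Next I would invoke the standard fact recalled in the introduction (see e.g. \cite{Popp1977}) that for a set-theoretically free $\mathbb{G}_{a}$-action the properness of the action is equivalent to the separatedness of its geometric quotient. Applied to the lifted action, this yields that the lifted $\mathbb{G}_{a}$-action on $V\times\mathbb{A}^{1}$ is proper if and only if $V'$ is a separated algebraic space. Finally, since $\rho':V'\rightarrow\mathfrak{S}$ is itself a $\mathbb{G}_{a}$-bundle over an $\mathfrak{S}$ as in \S\,\ref{par:alg-space-const}, Theorem \ref{thm:Aff-criterion} applies verbatim to it and shows that $V'$ is a separated algebraic space if and only if $V'$ is an affine scheme. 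Concatenating these two equivalences proves the corollary.

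The substantive content is already packaged inside Theorem \ref{thm:Aff-criterion}, so the argument is essentially a bookkeeping of equivalences. The one point deserving care, and the closest thing to an obstacle, is checking that the geometric quotient of the lifted action genuinely \emph{is} the $\mathbb{G}_{a}$-bundle $V'$ over $\mathfrak{S}$, so that Theorem \ref{thm:Aff-criterion} is legitimately applicable to $\rho'$; this is exactly what the correspondence and the cartesian square established immediately before the statement provide, so no real difficulty remains.
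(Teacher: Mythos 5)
Your argument is correct and follows the same route as the paper: identify $V'$ with the geometric quotient $V\times\mathbb{A}^{1}/\mathbb{G}_{a}$ via the correspondence set up before the statement, use the equivalence between properness of the lifted action and separatedness of that quotient, and then conclude with Theorem \ref{thm:Aff-criterion}. The paper's own proof is just a terser version of exactly this chain of equivalences.
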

\begin{proof}
Since properness of the lifted $\mathbb{G}_{a}$-action on $V\times\mathbb{A}^{1}$
is equivalent to the separatedness of the algebraic space $V'\simeq V\times\mathbb{A}^{1}/\mathbb{G}_{a}$,
the assertion is a direct consequence of Theorem \ref{thm:Aff-criterion}
above. 
\end{proof}

\section{Twin triangular $\mathbb{G}_{a}$-actions of affine $3$-spaces over
Dedekind domains }

In what follows, we let $X$ be the spectrum of a Dedekind domain
$A$ over $\mathbb{C}$, and we let $\mathbb{A}_{X}^{3}$ be the spectrum
of the polynomial ring $A[y,z_{1},z_{2}]$ in three variables over
$A$. Algebraic actions of $\mathbb{G}_{a,X}=\mathbb{G}_{a}\times_{{\rm Spec}\left(\mathbb{C}\right)}X$
on $\mathbb{A}_{X}^{3}$ are in one-to-one correspondence with locally
nilpotent $A$-derivations of $A[y,z_{1},z_{2}]$. Such an action
is called triangular if the corresponding derivation can be written
as $\partial=r\partial_{y}+p_{1}(y)\partial_{z_{1}}+p_{2}(x,y)\partial z_{2}$
for some $r\in A$, $p_{1}\in A[y]$ and $p_{2}\in A[y,z_{1}]$. A
triangular $\mathbb{G}_{a,X}$-action on $\mathbb{A}_{X}^{3}$ is
said to be \emph{twin-triangular} if the corresponding $p_{2}$ belongs
to the sub-ring $A[y]$ of $A[y,z_{1}]$.

\subsection{Proper twin-triangular $\mathbb{G}_{a}$-actions are translations}

\indent\newline\noindent This sub-section is devoted to the proof
of the following result:
\begin{thm}
\label{thm:Main-Theorem} A proper twin-triangular $\mathbb{G}_{a,X}$-action
on $\mathbb{A}_{X}^{3}$ is a translation, i.e., $\mathbb{A}_{X}^{3}/\mathbb{G}_{a,X}$
is $X$-isomorphic to $\mathbb{A}_{X}^{2}$ and $\mathbb{A}_{X}^{3}$
is equivariantly isomorphic to $\mathbb{A}_{X}^{3}/\mathbb{G}_{a}\times_{X}\mathbb{G}_{a,X}$
where $\mathbb{G}_{a,X}$ acts by translations on the second factor. 
\end{thm}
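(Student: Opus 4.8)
The plan is to reduce the global statement over an arbitrary Dedekind domain to a local problem at a single closed point, and then to exploit the twin-triangular symmetry between the two variables $z_1$ and $z_2$ together with the affineness criteria of the previous section. First I would observe that over the complement $X_*$ of the finitely many closed points where the coefficient $r\in A$ vanishes, the derivation $\partial=r\partial_y+p_1(y)\partial_{z_1}+p_2(y)\partial_{z_2}$ admits $y/r$ as a slice, so the action is already equivariantly trivial there with affine geometric quotient. Thus the question of Zariski local triviality is concentrated at the closed points $o$ dividing $r$, and by localizing $A$ at such a point we may assume $X=\mathrm{Spec}(A)$ is the spectrum of a discrete valuation ring with uniformizing parameter $x$ and $r=x^n$ for some $n\geq 1$. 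The goal at this stage is to cover the closed fiber of $\mathrm{pr}_X:\mathbb{A}^3_X\to X$ by two invariant affine open subsets on which the action is a translation.

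Next I would bring the theory of $\S\,1$ to bear through the two projections $\mathrm{pr}_{y,z_1}$ and $\mathrm{pr}_{y,z_2}$. Projecting away $z_2$ sends $\partial$ to the triangular derivation $x^n\partial_y+p_1(y)\partial_{z_1}$ on $\mathbb{A}^2_X=\mathrm{Spec}(A[y,z_1])$; applying the setup of $\S\,\ref{par:Open-subs-A3}$ to this derivation produces an invariant principal open subset on which the geometric quotient is one of the algebraic spaces $\mathfrak{S}(X,o,h_0)$ of Proposition \ref{prop:Restricted-quotient}. The full threefold $\mathbb{A}^3_X$, restricted over such an open set $U_1$, becomes the total space of a $\mathbb{G}_a$-bundle over $\mathfrak{S}$ obtained by lifting the induced $\mathbb{G}_a$-action along the trivial $z_2$-direction. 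Here is where Corollary \ref{cor:Affine-extended-quotient} enters: since the ambient action on $\mathbb{A}^3_X$ is proper by hypothesis, its restriction to the invariant open subset $V_1=\mathrm{pr}_{y,z_1}^{-1}(U_1)$ is proper, so the lifted action on $V_\partial\times\mathbb{A}^1$ is proper, and the Corollary forces the corresponding quotient $V_1/\mathbb{G}_a$ to be an affine scheme. Consequently $V_1\to V_1/\mathbb{G}_a$ is a trivial $\mathbb{G}_a$-bundle over an affine base, i.e. the action is a translation over $V_1$. By the twin-triangular symmetry $z_1\leftrightarrow z_2$, the identical argument applied to $\mathrm{pr}_{y,z_2}$ yields a second invariant affine open subset $V_2$ on which the action is a translation with affine quotient.

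The main obstacle, and the step requiring genuine care, is to arrange that the two open subsets $V_1$ and $V_2$ together cover the closed fiber over $o$. The open subset $U_i$ is cut out by inverting the discriminant-type polynomial $\alpha_i$ describing the branch locus of the integral $\overline{P_i}$ of $p_i$ over the closed point; geometrically, $V_i$ omits precisely the closed-fiber points lying over branch points of the cover $h_{0,i}$, which correspond to multiple roots of $\overline{P_i}(y)-t$. I would therefore need to verify that a point of the closed fiber excluded from $V_1$ (a multiple root of the first integral) cannot simultaneously be excluded from $V_2$, and this is exactly the place where properness of the \emph{original} action on $\mathbb{A}^3_X$ must be invoked in an essential way rather than merely as an inheritance to the $V_i$. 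The heuristic is that if a closed-fiber point were a ramification point for \emph{both} integrals simultaneously, the fibers of the algebraic quotient would contain several distinct orbits limiting together, contradicting the separatedness of the geometric quotient that properness guarantees. Once this covering is secured, Zariski local triviality of the proper twin-triangular action follows over the critical point $o$, hence globally after patching with the trivial locus over $X_*$.

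Finally I would conclude by the generalization, noted in the introduction, of the theorem of Deveney, van Rossum and Finston: a Zariski locally trivial triangular $\mathbb{G}_{a,X}$-action on $\mathbb{A}^3_X$ is automatically a translation. Applying this to our now-established Zariski local triviality gives that $\mathbb{A}^3_X/\mathbb{G}_{a,X}$ is $X$-isomorphic to $\mathbb{A}^2_X$ and that $\mathbb{A}^3_X\simeq \mathbb{A}^3_X/\mathbb{G}_{a,X}\times_X\mathbb{G}_{a,X}$ equivariantly, which is the assertion of the theorem.
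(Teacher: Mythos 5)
Your overall architecture matches the paper's: reduce to a discrete valuation ring with $r=x^n$, use the two projections $\mathrm{pr}_{y,z_i}$ to produce invariant open subsets $V_i=\mathrm{pr}_{y,z_i}^{-1}(V_{\partial_i})$ on which properness of the lifted action plus Corollary \ref{cor:Affine-extended-quotient} gives affine quotients and hence equivariant triviality, and then conclude via the generalization of Deveney--van Rossum--Finston (Proposition \ref{prop:Loc-triv-TR}). But the step you single out as ``requiring genuine care'' --- that $V_1\cup V_2$ covers the closed fiber --- is exactly where your argument has a genuine gap. You propose to derive the covering from properness of the original action, via the heuristic that a common ramification point would force distinct orbits to limit together and violate separatedness. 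This is not established and is not how the covering is obtained. Note first that exclusion from $V_i$ is governed not by the ramification points of $\overline{P}_i$ (the zeroes of $\overline{p}_i$, which set-theoretic freeness already forces to be disjoint for $i=1,2$) but by the full inverse image $\overline{P}_i^{-1}(B_i)$ of the branch locus, a strictly larger set; a point of the closed fiber can lie in $\overline{P}_1^{-1}(B_1)\cap\overline{P}_2^{-1}(B_2)$ without being a zero of either $\overline{p}_i$, and nothing in Theorem \ref{thm:Aff-criterion} or the separatedness of the quotient rules this out directly. Moreover your argument silently assumes both induced derivations $x^n\partial_y+p_i(y)\partial_{z_i}$ are irreducible (i.e.\ $\overline{p}_i\neq 0$), which can fail for the given coordinates.

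The paper closes this gap with Lemma \ref{lem:Bad-Plane-Removal}, which uses only set-theoretic freeness, not properness: after a linear change of coordinates $z_2\mapsto\lambda z_2+\mu z_1$ preserving twin-triangularity (and $z_1\mapsto z_1+z_2$ if needed to make $\overline{p}_1\neq 0$), a general-position argument in $\mathbb{P}^2$ --- choosing the second projection so that the lines through the finitely many points $a_i=j(\overline{P}_1^{-1}(B_1))$ meet every local analytic branch of the curve $j(\mathbb{A}^1)$ transversally --- arranges that $\overline{P}_1^{-1}(B_1)$ and $(\lambda\overline{P}_2+\mu\overline{P}_1)^{-1}(B_2)$ are disjoint. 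This is the essential use of twin-triangularity: the freedom to replace $P_2$ by $\lambda P_2+\mu P_1$ is what makes the two bad loci movable off each other. Without this lemma (or a proof of your claimed implication from properness, which you do not supply), the covering of the closed fiber, and hence the Zariski local triviality, is not justified.
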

\begin{parn} The argument of the proof given below can be decomposed
in two steps : we first establish in Proposition \ref{prop:Loc-triv-TR}
that any Zariski locally trivial triangular $\mathbb{G}_{a,X}$-action
on $\mathbb{A}_{X}^{3}$ is a translation. This reduces the problem
to showing that a proper twin-triangular $\mathbb{G}_{a,X}$-action
on $\mathbb{A}_{X}^{3}$ is not only equivariantly trivial in the
\'etale topology, which always holds for a proper whence free $\mathbb{G}_{a,X}$-action,
but also in the Zariski topology. This is done in Proposition \ref{prop:Twin-Loc-trivi}.
In the sequel, unless otherwise specified, we implicitly work in the
category of schemes over $X$ and we denote $\mathbb{G}_{a,X}$ simply
by $\mathbb{G}_{a}$.

\end{parn}

\noindent We begin with the following generalization of Theorem 2.1
in \cite{Deveney2004}:
\begin{prop}
\label{prop:Loc-triv-TR} Let $A$ be a Dedekind domain over $\mathbb{C}$
and let $\partial$ be a triangular $A$-derivation of $A[y,z_{1},z_{2}]$
generating a Zariski locally trivial $\mathbb{G}_{a}$-action on $\mathbb{A}_{X}^{3}={\rm Spec}(A\left[y,z_{1},z_{2}\right])$.
Then the action is equivariantly trivial with quotient isomorphic
to $\mathbb{A}_{X}^{2}$. \end{prop}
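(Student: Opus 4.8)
The plan is to prove the two assertions by exhibiting explicit generators of the invariant ring rather than by any abstract cancellation argument. Concretely, I would show that $\ker\partial=A[t_{1},t_{2}]$ for two invariants $t_{1},t_{2}$ that are algebraically independent over $\mathrm{Frac}(A)$, so that $\ker\partial\simeq A^{[2]}$ and $\mathbb{A}_{X}^{3}/\mathbb{G}_{a}=\mathrm{Spec}(\ker\partial)\simeq\mathbb{A}_{X}^{2}$. The equivariant triviality then comes for free: the geometric quotient $\mathbb{A}_{X}^{3}\rightarrow\mathbb{A}_{X}^{2}$ is a $\mathbb{G}_{a}$-torsor over an affine base, hence trivial because $H^{1}(\mathbb{A}_{X}^{2},\mathcal{O})=0$, which produces a global slice $s$ and the identification $A[y,z_{1},z_{2}]=(\ker\partial)[s]$. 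I stress that one cannot instead deduce $\ker\partial\simeq A^{[2]}$ from a slice by cancellation, since cancellation of a variable from $A^{[3]}$ is unavailable in this generality; the invariants must be built by hand.

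Writing $\partial=r\partial_{y}+p_{1}(y)\partial_{z_{1}}+p_{2}(y,z_{1})\partial_{z_{2}}$ and choosing an integral $P_{1}\in A[y]$ of $p_{1}$, one checks at once that $t_{1}=P_{1}(y)-rz_{1}$ lies in $\ker\partial$. The second invariant should have the shape $t_{2}=P_{2}(y,z_{1})-rz_{2}$, which forces the equation $\bar{\partial}P_{2}=rp_{2}$ in $A[y,z_{1}]$, where $\bar{\partial}=r\partial_{y}+p_{1}(y)\partial_{z_{1}}$. The key structural remark I would use is that both the solvability of this equation (membership of $rp_{2}$ in $\mathrm{im}\,\bar{\partial}$) and the equality $\ker\partial=A[t_{1},t_{2}]$ are statements about $A$-submodules: since an $A$-derivation is $A$-linear, kernels and images localize, so each of these conditions may be verified prime by prime on $X$. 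Over the principal open subset $X_{r}=\{r\neq0\}$ both hold trivially, $y/r$ being a slice, so the entire problem reduces to the finitely many primes $\mathfrak{p}\in V(r)$, at which $A_{\mathfrak{p}}$ is a discrete valuation ring with uniformizer $x$ and $r$ equal to a unit times a power $x^{n}$. This places us precisely in the local setting of $\S$\ref{par:Open-subs-A3}.

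The main obstacle is exactly this local step over the valuation ring, where the fibres lying above $\{o\}=V(r)$ carry a no longer free induced action. Here Zariski local triviality is the essential input: in concrete terms it produces invariant functions generating the unit ideal together with local slices, and confronting these with the triangular normal form forces divisibility by the appropriate power of $x$, in the same spirit as the integrality condition (b) of Theorem \ref{thm:Aff-criterion}. It is this divisibility that lets one solve $\bar{\partial}P_{2}=rp_{2}$ integrally and recognize that $t_{1},t_{2}$ generate the local invariant ring, so that the local geometric quotient is the affine plane and not merely a strictly quasi-affine scheme or a non-separated space. Away from the simple-root hypothesis of \cite{Deveney2002} the invariants cannot be written down in closed form, so the required integrality must be extracted from local triviality itself; this is where the real work lies.

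Finally I would reassemble. Local solvability at every prime yields a global $P_{2}\in A[y,z_{1}]$, hence a global invariant $t_{2}=P_{2}-rz_{2}$, and local equality yields $\ker\partial=A[t_{1},t_{2}]$ with $t_{1},t_{2}$ algebraically independent over $\mathrm{Frac}(A)$, so $\ker\partial\simeq A^{[2]}$. Consequently $\mathbb{A}_{X}^{3}/\mathbb{G}_{a}=\mathrm{Spec}(\ker\partial)\simeq\mathbb{A}_{X}^{2}$ is affine, and the induced $\mathbb{G}_{a}$-torsor $\mathbb{A}_{X}^{3}\rightarrow\mathbb{A}_{X}^{2}$ over this affine base is trivial by vanishing of $H^{1}(\mathbb{A}_{X}^{2},\mathcal{O})$. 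A global slice therefore exists, giving the equivariant isomorphism $\mathbb{A}_{X}^{3}\simeq\mathbb{A}_{X}^{2}\times_{X}\mathbb{G}_{a}$ with $\mathbb{G}_{a}$ acting by translation on the second factor, which is the asserted translation.
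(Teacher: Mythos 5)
Your proposal takes a genuinely different route from the paper -- you try to exhibit the invariant ring explicitly as $A[t_{1},t_{2}]$ -- but the two places where you say ``this is where the real work lies'' are in fact unproven assertions, and at least one of them hides a logical gap rather than just missing details. First, the claim that Zariski local triviality forces $rp_{2}\in\mathrm{im}\,\bar{\partial}$ and $\ker\partial=A[t_{1},t_{2}]$ is exactly the computation of invariant rings that the paper is structured to avoid: for triangular derivations the invariant ring is finitely generated (Daigle--Freudenburg) but generically needs more generators than $t_{1}$ and a single $t_{2}=P_{2}(y,z_{1})-rz_{2}$ (the extra ``residual'' invariants are the whole difficulty in \cite{Deveney2004} and \cite{Deveney2002}, and the simple-roots hypothesis of \cite{Deveney2002} was needed precisely to control them). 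You give no mechanism by which local triviality produces the divisibility you need; a heuristic about ``confronting local slices with the triangular normal form'' is not an argument. Second, and more fundamentally: even if you could prove $\ker\partial\simeq A^{[2]}$, the conclusion $\mathbb{A}_{X}^{3}/\mathbb{G}_{a}=\mathrm{Spec}(\ker\partial)$ does not follow. For a Zariski locally trivial action the geometric quotient $Y$ exists as a quasi-affine scheme with $\Gamma(Y,\mathcal{O}_{Y})=\ker\partial$, but the canonical map $Y\rightarrow\mathrm{Spec}(\ker\partial)$ is a priori only an open immersion with small complement; Winkelmann's example cited in the introduction gives a Zariski locally trivial triangular action (in dimension $5$) whose geometric quotient is \emph{strictly} quasi-affine even though its invariant ring is perfectly nice. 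Proving surjectivity of $Y\rightarrow\mathrm{Spec}(\ker\partial)$ is the actual content of the paper's proof (reduction to a d.v.r.\ and the fiberwise surjectivity argument of \cite[Theorem 2.1]{Deveney2004}), and your proposal never addresses it.

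Your parenthetical warning against cancellation is also misdirected. The paper does use a cancellation argument, but a legitimate one: once $Y$ is known to be affine with all fibers over $X$ isomorphic to affine planes (via \cite{Snow1988}), Sathaye's theorem and Bass--Connell--Wright show $\psi:Y\rightarrow X$ is a rank-$2$ vector bundle, the affineness of $Y$ makes $\mathbb{A}_{X}^{3}\rightarrow Y$ a trivial $\mathbb{G}_{a}$-bundle, and then $Y\times\mathbb{A}^{1}\simeq\mathbb{A}_{X}^{3}$ is cancelled using \cite[IV 3.5]{Bass1968} for vector bundles over the Dedekind scheme $X$ -- no cancellation of abstract varieties is involved. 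This package is what lets the paper avoid writing down any invariants beyond knowing $\Gamma(Y,\mathcal{O}_{Y})$ is finitely generated; your approach would have to replace it with a hard, and quite possibly false, structure theorem for $\ker\partial$.
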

\begin{proof}
The hypotheses imply that $\mathbb{A}_{X}^{3}$ has the structure
of Zariski locally trivial $\mathbb{G}_{a}$-bundle over a a quasi-affine
$X$-scheme $\psi:Y=\mathbb{A}_{X}^{3}/\mathbb{G}_{a}\rightarrow X$
(see e.g. \cite{Deveney1994}). Furthermore, since each fiber, closed
or not, of the invariant morphism ${\rm pr}_{X}:\mathbb{A}_{X}^{3}\rightarrow X$
is isomorphic to an affine $3$-space equipped with an induced free
triangular $\mathbb{G}_{a}$-action, it follows from \cite{Snow1988}
that all fibers of $\psi:Y\rightarrow X$ are isomorphic to affine
planes over the corresponding residue fields. It is enough to show
that $Y$ is an affine $X$-scheme. Indeed, if so, then by virtue
of \cite{Sathaye1983}, $\psi:Y\rightarrow X$ is in fact a locally
trivial $\mathbb{A}^{2}$-bundle in the Zariski topology whence a
vector bundle of rank $2$ by \cite{Bass1977}. Furthermore, the affineness
of $Y$ implies that the quotient morphism $\mathbb{A}_{X}^{3}\rightarrow Y$
is a trivial $\mathbb{G}_{a}$-bundle. Thus $Y\times\mathbb{A}^{1}\simeq\mathbb{A}_{X}^{3}$
as bundles over $X$ and so $\psi:Y\rightarrow X$ is the trivial
bundle $\mathbb{A}_{X}^{2}$ over $X$ by virtue of \cite[IV 3.5]{Bass1968}.
The affineness of $\psi:Y\rightarrow X$ being a local question with
respect to the Zariski topology on $X$, we may reduce to the case
where $A$ is a discrete valuation ring with uniformizing parameter
$x$ and residue field $\mathbb{C}$. Since $\Gamma(Y,\mathcal{O}_{Y})\simeq A[y,z_{1},z_{2}]^{\mathbb{G}_{a}}$
is finitely generated by virtue of \cite{Daigle1998}, it is enough
to show that the canonical morphism $\alpha:Y\rightarrow Z={\rm Spec}(A[y,z_{1},z_{2}]^{\mathbb{G}_{a}})$
is surjective, whence an isomorphism. If $\partial y\in A^{*}$ then
the result is clear. Otherwise if $\partial y=0$ then the assertion
follows from \emph{loc. cit.} We may thus assume that $\partial y\in xA\setminus\left\{ 0\right\} $
and then the result follows verbatim from the argument of \cite[Theorem 2.1]{Deveney2004}
which shows that $\alpha$ is surjective over the closed point of
$X$. 
\end{proof}
\noindent Now it remains to show the following:
\begin{prop}
\label{prop:Twin-Loc-trivi} A proper twin-triangular $\mathbb{G}_{a}$-action
on $\mathbb{A}_{X}^{3}$ is locally trivial in the Zariski topology. \end{prop}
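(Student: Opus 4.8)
The plan is to reduce to the case of a discrete valuation ring, to manufacture enough invariant affine open subsets carrying \emph{scheme} quotients by means of Corollary \ref{cor:Affine-extended-quotient}, and then to use the symmetry afforded by twin-triangularity to arrange that these cover the closed fibre. Since Zariski local triviality is local on $X$, I would first reduce to the case where $A$ is a discrete valuation ring with uniformizing parameter $x$, closed point $o$ and residue field $\mathbb{C}$, the action being generated by $\partial=r\partial_{y}+p_{1}(y)\partial_{z_{1}}+p_{2}(y)\partial_{z_{2}}$. If $r\in A^{*}$ then $y/r$ is a global slice and the action is a translation, so after multiplying $\partial$ by a unit of $A$ I may assume $r=x^{n}$ with $n\geq1$. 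Over the punctured base $X_{*}=X\setminus\{o\}$ the parameter $x$ is invertible, $y/x^{n}$ is a slice, and the action is equivariantly trivial; hence it suffices to cover the closed fibre $\mathrm{pr}_{X}^{-1}(o)$ by invariant affine open subsets on which the induced $\mathbb{G}_{a}$-bundle is trivial.

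I would produce such charts from the projections. Forgetting $z_{2}$, the action descends to the triangular action on $\mathbb{A}_{X}^{2}=\mathrm{Spec}(A[y,z_{1}])$ generated by $\partial_{1}=x^{n}\partial_{y}+p_{1}(y)\partial_{z_{1}}$; with the notation of $\S$\ref{par:Open-subs-A3} this yields an invariant principal affine open $V_{\partial_{1}}\subset\mathbb{A}_{X}^{2}$ with $V_{\partial_{1}}/\mathbb{G}_{a}\simeq\mathfrak{S}(X,o,h_{0,1})$ by Proposition \ref{prop:Restricted-quotient}. Its preimage $V_{1}=\mathrm{pr}_{y,z_{1}}^{-1}(V_{\partial_{1}})\simeq V_{\partial_{1}}\times\mathrm{Spec}(\mathbb{C}[z_{2}])$ carries the full action as a lift of $\partial_{1}$, and this lift, being the restriction of the proper action $\partial$ to an invariant open subset, is again proper. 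Corollary \ref{cor:Affine-extended-quotient} then guarantees that $V_{1}/\mathbb{G}_{a}$ is an affine scheme, i.e. a Zariski open subscheme of $\mathbb{A}_{X}^{3}/\mathbb{G}_{a}$ over which the bundle is trivial. Twin-triangularity enters precisely because the same construction applies to every linear combination: for $\lambda=[\mu:\nu]\in\mathbb{P}^{1}$ one replaces $z_{1}$ by the linear form $\nu z_{1}-\mu z_{2}$, so that $p_{1}$ is replaced by $p_{\lambda}=\nu p_{1}-\mu p_{2}\in A[y]$, and obtains an invariant affine open $V_{\lambda}$ with affine quotient. Thus I would have at my disposal a whole $\mathbb{P}^{1}$-family of candidate charts.

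It remains to cover the closed fibre by finitely many $V_{\lambda}$, and this is the heart of the matter. A closed-fibre point $(0,y,z_{1},z_{2})$ lies in $V_{\lambda}$ exactly when $\overline{P_{\lambda}}(y)$ avoids the branch locus of $\overline{P_{\lambda}}$, where $P_{\lambda}=\nu P_{1}-\mu P_{2}$; hence the uncovered locus of $V_{\lambda}$ is the finite set $\overline{P_{\lambda}}^{-1}(B_{\lambda})$, the preimage of the critical values of $\overline{P_{\lambda}}$. Introducing the closed-fibre map $F=(\overline{P_{1}},\overline{P_{2}}):\mathbb{A}^{1}\to\mathbb{A}^{2}$ with image curve $\Gamma=F(\mathbb{A}^{1})$, the critical points of $\overline{P_{\lambda}}$ are the points where the tangent to $\Gamma$ is parallel to $\lambda$, and $\overline{P_{\lambda}}^{-1}(B_{\lambda})$ is the $F$-preimage of the $\lambda$-lines through these tangency points. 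I would first extract from properness that $\bar{p}_{1},\bar{p}_{2}$ have no common zero (set-theoretic freeness on the closed fibre) and that $F$ is injective, so that $\Gamma$ is a smoothly embedded affine line. Granting this, $\overline{P_{\lambda_{1}}}^{-1}(B_{\lambda_{1}})$ is finite for a generic $\lambda_{1}$, and since for each fixed $y_{0}$ only finitely many directions $\lambda$ make $y_{0}$ belong to $\overline{P_{\lambda}}^{-1}(B_{\lambda})$ (these correspond to the finitely many tangents to $\Gamma$ through $F(y_{0})$), a second generic direction $\lambda_{2}$ has bad set disjoint from the first. Then $V_{\lambda_{1}}\cup V_{\lambda_{2}}$ covers the closed fibre; combined with the triviality over $X_{*}$ this exhibits $\mathbb{A}_{X}^{3}/\mathbb{G}_{a}$ as a scheme, and since $\mathbb{G}_{a}$ is special the bundle is locally trivial in the Zariski topology, which is the assertion.

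The main obstacle is the step translating properness into the geometric statement that $F$ is injective, equivalently that $\Gamma$ is an embedded line. On the reduced closed fibre a violation $F(y_{0})=F(y_{0}')$ with $y_{0}\neq y_{0}'$ should produce a one-parameter family of orbits $t\cdot P(s)$ with $t\to\infty$ whose limit identifies the distinct closed-fibre orbits over $y_{0}$ and $y_{0}'$, contradicting separatedness of the quotient. The delicate point is that because $r=x^{n}$ with $n\geq2$ thickens the central fibre, this degeneration must be controlled not merely on the reduced fibre but modulo $x^{n}$, so the testing arc $y(s)$ has to be chosen with enough higher-order corrections to keep the $z_{1}$- and $z_{2}$-coordinates of $t\cdot P(s)$ bounded. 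Making this analysis precise, and verifying that the resulting disjointness of branch loci survives the finitely many admissible changes of chart, is where the argument requires genuine work; everything else is formal once Corollary \ref{cor:Affine-extended-quotient} is available.
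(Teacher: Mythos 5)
Your overall architecture coincides with the paper's: reduce to a discrete valuation ring with $r=x^{n}$, use the two projections $\mathrm{pr}_{z_{i}}$ to produce invariant affine open subsets $\mathrm{pr}_{z_{i}}^{-1}(V_{\partial_{i}})\simeq V_{\partial_{i}}\times\mathbb{A}^{1}$ whose quotients are affine by Corollary \ref{cor:Affine-extended-quotient} (the restriction of a proper action to an invariant open subset being again proper), and conclude by covering the closed fibre. The gap is in the step you yourself flag as ``the heart of the matter''. You propose to derive the disjointness of the two bad loci from the claim that properness forces $F=(\overline{P}_{1},\overline{P}_{2})$ to be injective, so that $\Gamma$ is a smoothly embedded line; you do not prove this, and it is not a consequence of properness once $n\geq2$: the paper's own properness criterion (Proposition \ref{prop:Proper-Crit}) only requires $P_{j}(\sigma_{\overline{g},i})-P_{j}(\sigma_{\overline{g}',i})$ to have $x$-adic valuation $n-k$ with $1\leq k\leq n$, i.e.\ merely $<n$, so for $n\geq2$ two distinct points of a fibre of $\overline{P}_{1}$ over a regular value may have the same image under $\overline{P}_{2}$ modulo $x$. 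An argument routed through injectivity of $F$, or through the proposed arc-degeneration analysis modulo $x^{n}$, therefore cannot be completed as stated.

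The repair --- and this is exactly what the paper's Lemma \ref{lem:Bad-Plane-Removal} does --- is to observe that injectivity is never needed. Only set-theoretic freeness is used: it gives that $\overline{p}_{1},\overline{p}_{2}$ are relatively prime and, after replacing $z_{1}$ by $z_{1}+z_{2}$ if necessary, both nonzero. One then takes $C$ to be the closure in $\mathbb{P}^{2}$ of the image of $j=(\overline{P}_{1},\overline{P}_{2})$ --- an arbitrary, possibly singular and non-injectively parametrized curve --- and notes that for each of the finitely many points $a_{i}=j(y)$ with $y$ in the preimage of the branch locus of $\overline{P}_{1}$, the set of lines through $a_{i}$ tangent to some local analytic branch of $C$ is finite. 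Choosing the second projection direction outside these finitely many tangent lines yields $\lambda\overline{P}_{2}+\mu\overline{P}_{1}$ whose bad locus misses that of $\overline{P}_{1}$; no embeddedness of $\Gamma$ and no control of orbits modulo $x^{n}$ is required. Note also that the finiteness of $\overline{P}_{\lambda}^{-1}(B_{\lambda})$ holds for every $\lambda$ with $\overline{p}_{\lambda}\neq0$, independently of any injectivity, so the only genuine content of this step is the general-position choice of the second direction. With Lemma \ref{lem:Bad-Plane-Removal} substituted for your injectivity step, the rest of your argument is the paper's proof.
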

\begin{proof}
The question is local in the Zariski topology on $X$. Since the corresponding
derivation $\partial=r\partial_{y}+p_{1}(y)\partial_{z_{1}}+p_{2}(y)\partial z_{2}$
of $A[y,z_{1},z_{2}]$ has a slice over the principal open subset
$D_{r}$ of $X$, whence is equivariantly trivial over it, we may
reduce after localizing at the finitely many maximal ideals of $A$
containing $r$ to the case where $A$ is discrete valuation ring
with uniformizing parameter $x$ and $r=x^{n}$ for some $n\geq1$.
Then it is enough to show that the closed fiber $\mathbb{A}_{o}^{3}$
of the projection ${\rm pr}_{X}:\mathbb{A}_{X}^{3}\rightarrow X$
is contained in a union of invariant open subsets of $\mathbb{A}_{X}^{3}$
on which the induced actions are equivariantly trivial. By virtue
of Lemma \ref{lem:Bad-Plane-Removal} below, we may assume up to a
coordinate change preserving twin-triangularity that the residue classes
$\overline{p}_{i}\in\mathbb{C}[y]$ of the $p_{i}$'s modulo $x$
are non constant and that the inverse images of the branch loci of
the morphisms $\overline{P}_{i}:{\rm Spec}\left(\mathbb{C}\left[y\right]\right)\rightarrow{\rm Spec}\left(\mathbb{C}\left[t\right]\right)$
defined by suitable integrals $\overline{P}_{i}$ of $\overline{p}_{i}$,
$i=1,2$ are disjoint. The first property guarantees that the triangular
derivations $\partial_{i}=x^{n}\partial_{y}+p_{i}(y)\partial_{z_{i}}$
of $A\left[y,z_{i}\right]$, $i=1,2$, are both irreducible. Furthermore,
if we let $V_{\partial_{i}}$ be the invariant open subset of $\mathbb{A}_{X,i}^{2}={\rm Spec}(A\left[y,z_{i}\right])$,
$i=1,2$, equipped with $\mathbb{G}_{a}$-action associated with $\partial_{i}$
as defined in $\S$ \ref{par:Open-subs-A3} above, then the second property
implies that $\mathbb{A}_{o}^{3}$ is contained in the union of the
open subsets ${\rm pr}_{z_{i}}^{-1}(V_{\partial_{i}})\simeq V_{\partial_{i}}\times\mathbb{A}^{1}$,
where ${\rm pr}_{z_{i}}:\mathbb{A}_{X}^{3}\rightarrow\mathbb{A}_{X,i}^{2}$,
$i=1,2$, are the natural projections. These projections being equivariant,
the $\mathbb{G}_{a}$-action on ${\rm \mathbb{A}_{X}^{3}}$ restricts
on ${\rm pr}_{z_{i}}^{-1}(V_{\partial_{i}})\simeq V_{\partial_{i}}\times\mathbb{A}^{1}$
to a proper lift of that on $V_{\partial_{i}}$, $i=1,2$, and so
the geometric quotients ${\rm pr}_{z_{i}}^{-1}(V_{\partial_{i}})/\mathbb{G}_{a}$,
$i=1,2$, are affine schemes by virtue of Corollary \ref{cor:Affine-extended-quotient}.
This implies in turn that the induced actions on the open subsets
${\rm pr}_{z_{i}}^{-1}(V_{\partial_{i}})$, $i=1,2$, are equivariantly
trivial and completes the proof. 
\end{proof}
\noindent In the proof of Proposition \ref{prop:Twin-Loc-trivi},
we exploited the following crucial technical fact concerning set-theoretically
free twin-triangular $\mathbb{G}_{a}$-actions:
\begin{lem}
\label{lem:Bad-Plane-Removal} Let $A$ be a discrete valuation ring
over $\mathbb{C}$ with uniformizing parameter $x$. A twin-triangular
$A$-derivation $\partial$ of $A[y,z_{1},z_{2}]$ generating a set-theoretically
free $\mathbb{G}_{a}$-action is conjugate to a one of the form $x^{n}\partial_{y}+p_{1}(y)\partial_{z_{1}}+p_{2}(y)\partial_{z_{2}}$
with the following properties:

a) The residue classes $\overline{p}_{i}\in\mathbb{C}[y]$ of the
polynomials $p_{i}\in A[y]$ modulo $x$, $i=1,2$, are both non zero
and relatively prime,

b) There exists integrals $\overline{P}_{i}\in\mathbb{C}[y]$ of $\overline{p}_{i}$,
$i=1,2$, for which the inverse images of the branch loci of the morphisms
$\overline{P}_{i}:\mathbb{A}^{1}\rightarrow\mathbb{A}^{1}$, $i=1,2$,
are disjoint. \end{lem}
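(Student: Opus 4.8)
The plan is to bring $\partial$ into the asserted normal form by a sequence of twin-triangularity preserving $A$-automorphisms and then to secure (a) and (b) one after the other. First I would normalize the leading coefficient: writing $r=ux^{n}$ with $u\in A^{*}$ and $n\geq 0$, the substitution $y\mapsto u^{-1}y$ is an $A$-automorphism carrying $\partial$ to the twin-triangular form $x^{n}\partial_{y}+p_{1}(y)\partial_{z_{1}}+p_{2}(y)\partial_{z_{2}}$. If $n=0$ then $y$ is a slice and the action is a translation, so I may assume $n\geq 1$. For $x\neq 0$ the coefficient $x^{n}$ is invertible, so $\partial y\neq 0$ and there are no fixed points over the generic point; over the closed point $\partial$ restricts to $\bar{p}_{1}(y)\partial_{z_{1}}+\bar{p}_{2}(y)\partial_{z_{2}}$, whose zero locus is $\{\bar{p}_{1}(y)=\bar{p}_{2}(y)=0\}$. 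Hence set-theoretic freeness is \emph{equivalent} to $\bar{p}_{1}$ and $\bar{p}_{2}$ having no common root in $\mathbb{A}^{1}_{\mathbb{C}}$; in particular they cannot both vanish.

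I address (a) next. If both residues are nonzero, then ``no common root'' is exactly coprimality in $\mathbb{C}[y]$. If one of them, say $\bar{p}_{1}$, vanishes identically, then $\bar{p}_{2}$ has no root and is therefore a nonzero constant; the elementary substitution $z_{1}\mapsto z_{1}+z_{2}$ replaces $p_{1}$ by $p_{1}+p_{2}$, leaves $p_{2}$ alone, preserves twin-triangularity and the normalized form $x^{n}\partial_{y}$, and makes both residues equal to that nonzero constant. All transformations used here have coefficients in $\mathbb{C}\subset A$, hence are genuine $A$-automorphisms, and they compose within the group of twin-triangularity preserving changes; this group contains the full $\mathrm{GL}_{2}(\mathbb{C})$ acting on $(z_{1},z_{2})$, since after mixing the new coefficients still depend only on $y$.

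For (b), I would first record that, writing $B_{i}$ for the branch locus of $\bar{P}_{i}$, the set $\bar{P}_{i}^{-1}(B_{i})$ is independent of the chosen integral (shifting $\bar{P}_{i}$ by a constant shifts $B_{i}$ identically) and that a translation $y\mapsto y+b$ moves $\bar{P}_{1}^{-1}(B_{1})$ and $\bar{P}_{2}^{-1}(B_{2})$ in tandem; thus the only effective tool is the $\mathrm{GL}_{2}$-mixing. If either residue is a nonzero constant its integral is affine and $B_{i}=\emptyset$, so (b) is automatic; I may therefore assume $\bar{p}_{1},\bar{p}_{2}$ are both nonconstant and, after at most one preliminary $\mathrm{GL}_{2}$-move subtracting a scalar multiple of one from the other (which either lowers a degree to $0$, returning to the constant case, or strictly decreases it) and relabelling, that $\deg\bar{p}_{1}<\deg\bar{p}_{2}$. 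Keeping $\bar{p}_{1}$ fixed I then run the one-parameter family $z_{2}\mapsto z_{2}+sz_{1}$, $s\in\mathbb{C}$, which replaces $\bar{p}_{2}$ by $\bar{p}_{2}+s\bar{p}_{1}$ (still of degree $\deg\bar{p}_{2}$) and leaves $S_{1}:=\bar{P}_{1}^{-1}(B_{1})$ untouched. For a fixed $y_{0}\in S_{1}$, the relation $y_{0}\in(\bar{P}_{2}+s\bar{P}_{1})^{-1}(B_{2}^{(s)})$ (with $B_{2}^{(s)}$ the branch locus of $\bar{P}_{2}+s\bar{P}_{1}$) forces a critical point $a$ of $\bar{P}_{2}+s\bar{P}_{1}$, necessarily with $\bar{p}_{1}(a)\neq 0$ by coprimality, and eliminating $s=-\bar{p}_{2}(a)/\bar{p}_{1}(a)$ reduces it to a root of \[Q_{y_{0}}(a)=\bigl(\bar{p}_{2}\bar{P}_{1}-\bar{p}_{1}\bar{P}_{2}\bigr)(a)+\bar{P}_{2}(y_{0})\,\bar{p}_{1}(a)-\bar{P}_{1}(y_{0})\,\bar{p}_{2}(a).\]

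The crux, and the step I expect to be the main obstacle, is to show that $Q_{y_{0}}\not\equiv 0$ for each of the finitely many $y_{0}\in S_{1}$, so that only finitely many $s$ are excluded and a generic admissible $s$ yields the disjointness required by (b). Here the preliminary normalization pays off: since $\deg\bar{p}_{1}\neq\deg\bar{p}_{2}$, the leading terms of $\bar{p}_{2}\bar{P}_{1}$ and $\bar{p}_{1}\bar{P}_{2}$ (both of degree $\deg\bar{p}_{1}+\deg\bar{p}_{2}+1$) do not cancel, because $\mathrm{lead}(\bar{P}_{i})=\mathrm{lead}(\bar{p}_{i})/(\deg\bar{p}_{i}+1)$; thus $\deg\bigl(\bar{p}_{2}\bar{P}_{1}-\bar{p}_{1}\bar{P}_{2}\bigr)=\deg\bar{p}_{1}+\deg\bar{p}_{2}+1$ strictly exceeds the degree $\leq\deg\bar{p}_{2}$ of the remaining, $y_{0}$-dependent terms, forcing $Q_{y_{0}}\not\equiv 0$. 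Since the $\mathrm{GL}_{2}$-moves preserve nonvanishing and coprimality of the residues, the final derivation retains property (a) while now also satisfying (b), which completes the argument.
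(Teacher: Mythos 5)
Your argument is correct. Part (a) is essentially identical to the paper's: both reduce set-theoretic freeness to $(x^{n},p_{1},p_{2})$ being the unit ideal, hence to $\overline{p}_{1},\overline{p}_{2}$ having no common root, and both fix the degenerate case $\overline{p}_{1}=0$ by the substitution $z_{1}\mapsto z_{1}+z_{2}$ after observing that $\overline{p}_{2}$ must then be a nonzero constant. For part (b) you take a genuinely different route. The paper works projectively: it embeds $\mathbb{A}^{2}$ in $\mathbb{P}^{2}$, takes the closure $C$ of the image of $j=(\overline{P}_{1},\overline{P}_{2})$, and observes that the finitely many lines through the marked points $j(\overline{P}_{1}^{-1}(B_{1}))$ that are tangent to some local analytic branch of $C$ meet $L_{\infty}$ in finitely many points, so a general center of projection yields a combination $\lambda\overline{P}_{2}+\mu\overline{P}_{1}$ whose ramification is avoided. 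You instead eliminate the parameter $s$ explicitly: the condition that $y_{0}\in\overline{P}_{1}^{-1}(B_{1})$ also lies in the preimage of the branch locus of $\overline{P}_{2}+s\overline{P}_{1}$ forces a critical point $a$ with $\overline{p}_{1}(a)\neq0$ (by coprimality) and hence a root of the resultant-like polynomial $Q_{y_{0}}$, and you kill the possibility $Q_{y_{0}}\equiv0$ by a leading-coefficient computation, which is exactly why you need the preliminary normalization $\deg\overline{p}_{1}\neq\deg\overline{p}_{2}$ (correctly achievable by one more $\mathrm{GL}_{2}$ move, using coprimality to rule out proportionality). The identity $\mathrm{lead}(\overline{P}_{i})=\mathrm{lead}(\overline{p}_{i})/(\deg\overline{p}_{i}+1)$ does the work the tangency-finiteness argument does in the paper. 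Your version is more elementary and effectively algorithmic (it exhibits the excluded set of $s$ as the image of the roots of finitely many explicit polynomials), at the cost of the extra degree normalization; the paper's version is coordinate-free and needs no such normalization, but requires handling the possibly singular image curve via its analytic branches. Both are valid proofs of the lemma.
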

\begin{proof}
A twin-triangular derivation $\partial=x^{n}\partial_{y}+p_{1}(y)\partial_{z_{1}}+p_{2}(y)\partial z_{2}$
generates a set-theoretically free $\mathbb{G}_{a}$-action if and
only if $x^{n}$, $p_{1}(y)$ and $p_{2}(y)$ generate the unit ideal
in $A[y,z_{1},z_{2}]$. So $\overline{p}_{1}$ and $\overline{p}_{2}$
are relatively prime and at least one of them, say $\overline{p}_{2}$,
is nonzero. If $\overline{p}_{1}=0$ then $p_{2}$ is necessarily
of the form $p_{2}(y)=c+x\tilde{p}_{2}(y)$ for some nonzero constant
$c$ and so changing $z_{1}$ for $z_{1}+z_{2}$ yields a twin-triangular
derivation conjugate to $\partial$ for which the corresponding polynomials
$p_{1}(y)$ and $p_{2}(y)$ both have non zero residue classes modulo
$x$. More generally, changing $z_{2}$ for $\lambda z_{2}+\mu z_{1}$
for general $\lambda\in\mathbb{C}^{*}$ and $\mu\in\mathbb{C}$ yields
a twin-triangular derivation conjugate to $\partial$ and still satisfying
condition a). So it remains to show that up to such a coordinate change,
condition b) can be achieved. This can be seen as follows : we consider
$\mathbb{A}^{2}$ embedded in $\mathbb{P}^{2}={\rm Proj}(\mathbb{C}[u,v,w])$
as the complement of the line $L_{\infty}=\left\{ w=0\right\} $ so
that the coordinate system $\left(u,v\right)$ on $\mathbb{A}^{2}$
is induced by the rational projections from the points $\left[0:1:0\right]$
and $\left[1:0:0\right]$ respectively. We let $C$ be the closure
in $\mathbb{P}^{2}$ of the image of the immersion $j:\mathbb{A}^{1}={\rm Spec}(\mathbb{C}[y])\rightarrow\mathbb{A}^{2}$
defined by integrals $\overline{P}_{1}$ and $\overline{P}_{2}$ of
$\bar{p}_{1}$ and $\bar{p}_{2}$, and we denote by $a_{1},\ldots,a_{r}\in C$
the images by $j$ of the points in the inverse image of the branch
locus of $\overline{P}_{1}:\mathbb{A}^{1}\rightarrow\mathbb{A}^{1}$.
Since the condition that a line through a fixed point in $\mathbb{P}^{2}$
intersects transversally a fixed curve is Zariski open, the set of
lines in $\mathbb{P}^{2}$ passing through a point $a_{i}$ and tangent
to a local analytic branch of $C$ at some point is finite. Therefore,
the complement of the finitely many intersection points of these lines
with $L_{\infty}$ is a Zariski open subset $U$ of $L_{\infty}$
with the property that for every $q\in U$, the line through $q$
and $a_{i}$, $i=1,\ldots,r$, intersects every local analytic branch
of $C$ transversally at every point. By construction, the rational
projections from $\left[0:1:0\right]$ and an arbitrary point in $U\setminus\{\left[0:1:0\right]\}$
induce a new coordinate system on $\mathbb{A}^{2}$ of the form $\left(u,\lambda v+\mu u\right)$,
$\lambda\neq0$, with the property that none of the $a_{i}$, $i=1,\ldots,r$,
is contained in the inverse image of the branch locus of the morphism
$\lambda\overline{P}_{2}+\mu\overline{P}_{1}:\mathbb{A}^{1}\rightarrow\mathbb{A}^{1}$.
Hence changing $z_{2}$ for $\lambda z_{2}+\mu z_{1}$ for a pair
$(\lambda,\mu)$ corresponding to a general point in $U$ yields a
twin-triangular derivation conjugate to $\partial$ and satisfying
simultaneously conditions a) and b). 
\end{proof}

\subsection{Complement : a criterion for properness of twin-triangular $\mathbb{G}_{a}$-actions}

\indent\newline\noindent In contrast with the set-theoretic freeness
of a $\mathbb{G}_{a}$-action on an affine variety, which can be easily
decided in terms of the corresponding locally nilpotent derivation
$\partial$ of its coordinate ring, it is difficult in general to
give effective conditions on $\partial$ which would guarantee that
the action is proper. However, for twin-triangular derivations, we
derive below from our previous descriptions a criterion that can be
checked algorithmically. 

\begin{parn} \label{par:Prop-Crit-setup} For a set-theoretically
free twin-triangular $\mathbb{G}_{a}$-action on the affine space
$\mathbb{A}_{X}^{3}={\rm Spec}(A[y,z_{1},z_{2}])$ over a Dedekind
domain $A$, properness is equivalent to the separatedness of the
algebraic space quotient $Y=\mathbb{A}_{X}^{3}/\mathbb{G}_{a}$. Since
$X$ is affine, the separatedness of $Y$ is equivalent to that of
the morphism $\theta:Y=\mathbb{A}_{X}^{3}/\mathbb{G}_{a}\rightarrow X$
induced by the invariant projection ${\rm pr}_{X}:\mathbb{A}_{X}^{3}\rightarrow X$.
The question being local in the Zariski topology on $X$, we may reduce
again to the case where $A$ is a discrete valuation ring with uniformizing
parameter $x$. 

We may further assume that the corresponding twin-triangular $A$-derivation
$\partial=x^{n}\partial_{y}+p_{1}(y)\partial_{z_{1}}+p_{2}(y)\partial z_{2}$
of $A[y,z_{1},z_{2}]$ satisfies the hypotheses of Lemma \ref{lem:Bad-Plane-Removal}.
If $n=0$, then $\partial$ generates an equivariantly trivial whence
proper $\mathbb{G}_{a}$-action with $y$ as an obvious global slice.
So we may assume from now on that $n\geq1$. Our assumptions guarantee
that similarly to $\S$ \ref{par:Open-subs-A3} above, an integral $P_{i}\in A[y]$
of $p_{i}$ defines a morphism $P_{i}:\mathbb{A}_{X}^{1}\rightarrow\mathbb{A}_{X}^{1}={\rm Spec}(A[t])$
whose restriction $\overline{P}_{i}$ over the closed point of $X$
is non constant. Passing to the Galois closure $C_{i}={\rm Spec}(R_{i})$
of the finite \'etale morphism obtained by restricting $\overline{P}_{i}$
over the complement $C_{0,i}\subset{\rm Spec}(\mathbb{C}[t])$ of
its branch locus enables as in the proof of Proposition \ref{prop:Restricted-quotient}
the expression of $P_{i}(y)-t\in A\otimes_{\mathbb{C}}R_{i}\left[y\right]$
as 
\begin{equation}
P_{i}(y)-t=S_{1,i}(y)\prod_{\overline{g}\in G_{i}/H_{i}}(y-\sigma_{\overline{g},i})+x^{n}S_{2,i}(y)\label{eq:decomp}
\end{equation}
for suitable elements $\sigma_{\overline{g},i}\in A\otimes_{\mathbb{C}}R_{i}$,
$\overline{g}\in G_{i}/H_{i}$ and polynomials $S_{1,i},S_{2,i}\in A\otimes_{\mathbb{C}}R_{i}\left[y\right]$.
Then we have the following criterion:

\end{parn}
\begin{prop}
\label{prop:Proper-Crit} With the assumption and notation above,
the following are equivalent:

a) $\partial$ generates a proper $\mathbb{G}_{a}$-action on $\mathbb{A}_{X}^{3}$,

b) For every $i\neq j$ in $\left\{ 1,2\right\} $ and every pair
of distinct elements $\overline{g},\overline{g}'\in G_{i}/H_{i}$,
$P_{j}(\sigma_{\overline{g},i})-P_{j}(\sigma_{\overline{g}',i})\in A\otimes_{\mathbb{C}}R_{i}$
can be written as $x^{n-k}\tilde{f}_{ij,\overline{g}\overline{g}'}$
where $1\leq k\leq n$ and where $\tilde{f}_{ij,\overline{g}\overline{g}'}\in A\otimes_{\mathbb{C}}R_{i}$
has invertible residue class modulo $x$. \end{prop}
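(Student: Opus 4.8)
The plan is to reduce the properness of the $\mathbb{G}_a$-action on $\mathbb{A}_X^3$ to the affineness of two specific geometric quotients, and then apply the criterion of Theorem \ref{thm:Aff-criterion} fiberwise. As explained in $\S$ \ref{par:Prop-Crit-setup}, properness of the action is equivalent to the separatedness of $Y=\mathbb{A}_X^3/\mathbb{G}_a$ over $X$, and since the question is local we may assume $A$ is a discrete valuation ring with uniformizing parameter $x$ and that $\partial=x^n\partial_y+p_1(y)\partial_{z_1}+p_2(y)\partial_{z_2}$ satisfies the conclusions of Lemma \ref{lem:Bad-Plane-Removal}. The key structural observation, already exploited in the proof of Proposition \ref{prop:Twin-Loc-trivi}, is that the two equivariant projections ${\rm pr}_{z_i}:\mathbb{A}_X^3\rightarrow\mathbb{A}_{X,i}^2={\rm Spec}(A[y,z_i])$ exhibit the action on ${\rm pr}_{z_i}^{-1}(V_{\partial_i})\simeq V_{\partial_i}\times\mathbb{A}^1$ as a proper lift (precisely when the action is proper) of the action on $V_{\partial_i}$ governed by the derivation $\partial_i=x^n\partial_y+p_i(y)\partial_{z_i}$. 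By Corollary \ref{cor:Affine-extended-quotient}, this lifted action is proper if and only if the geometric quotient ${\rm pr}_{z_i}^{-1}(V_{\partial_i})/\mathbb{G}_a$ is an affine scheme. Thus the first step is to argue that properness of the full action is equivalent to affineness of \emph{both} quotients ${\rm pr}_{z_i}^{-1}(V_{\partial_i})/\mathbb{G}_a$, $i=1,2$; the forward direction is immediate from restriction, while the reverse uses that the $V_{\partial_i}\times\mathbb{A}^1$ cover the closed fiber together with the generic separatedness over $X_*$.

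\textbf{Identifying the cocycle.} Next I would make Theorem \ref{thm:Aff-criterion}(b) explicit for each of these two quotients. The point is to compute, for each index $i$, the \v{C}ech $1$-cocycle $\{f_{\overline{g}\overline{g}'}\}$ describing the $\mathbb{G}_a$-bundle over $\mathfrak{S}(X,o,h_{0,i})$ associated to the lift coming from the $z_j$-direction ($j\neq i$). Working over the Galois cover $C_i$, the lifted action on $W_i=V_{\partial_i}\times_{C_{0,i}}C_i\times\mathbb{A}^1$ is generated by a derivation that differentiates $z_j$ via $p_j(y)$; on the local chart $Z_{\overline{g},i}\simeq X\times C_i$ where the slice is $u_{\overline{g},i}=x^{-n}(y-\sigma_{\overline{g},i})$, integrating $p_j(y)\,dy$ shows that the invariant lifting $z_j$-coordinate differs between charts $\overline{g}$ and $\overline{g}'$ by $x^{-n}\bigl(P_j(\sigma_{\overline{g},i})-P_j(\sigma_{\overline{g}',i})\bigr)$, using the transition $u_{\overline{g}'}=u_{\overline{g}}+x^{-n}(\sigma_{\overline{g}}-\sigma_{\overline{g}'})$ recorded in the proof of Proposition \ref{prop:Restricted-quotient}. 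Hence the cocycle entry is
\[
f_{ij,\overline{g}\overline{g}'}=x^{-n}\bigl(P_j(\sigma_{\overline{g},i})-P_j(\sigma_{\overline{g}',i})\bigr)\in A_x\otimes_{\mathbb{C}}R_i.
\]

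\textbf{Translating the criterion and the main obstacle.} With this identification, Theorem \ref{thm:Aff-criterion} says that the quotient ${\rm pr}_{z_i}^{-1}(V_{\partial_i})/\mathbb{G}_a$ is affine exactly when, for every pair of distinct $\overline{g},\overline{g}'\in G_i/H_i$, the element $f_{ij,\overline{g}\overline{g}'}$ can be written as $x^{-l}\tilde f$ with $l>1$ (equivalently $l\geq 1$ when the entry is not a unit multiple of $x^{-1}$) and $\tilde f$ of invertible residue class modulo $x$. Substituting $f_{ij,\overline{g}\overline{g}'}=x^{-n}(P_j(\sigma_{\overline{g},i})-P_j(\sigma_{\overline{g}',i}))$ and rewriting $x^{-n}\cdot x^{n-k}=x^{-k}$, this is precisely condition (b) of the present Proposition with $1\leq k\leq n$. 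Combining the two indices $i=1,2$ with the equivalence from the first step yields the stated equivalence of (a) and (b). The main obstacle I expect is the careful bookkeeping in the second step: one must verify that the invariant coordinate inherited from $z_j$ on each chart $Z_{\overline{g},i}$ is genuinely given by evaluating the integral $P_j$ at the slice locus $\sigma_{\overline{g},i}$ modulo the appropriate power of $x$, and that passing to the Galois closure and then descending the $G_i$-action does not alter the divisibility of the cocycle entries. The compatibility of the two chart structures (those of Proposition \ref{prop:Restricted-quotient}) with the lift must be matched exactly so that the exponent translates cleanly from the $x^{-l}$ form of Theorem \ref{thm:Aff-criterion}(b) to the $x^{n-k}$ form asserted here; the decomposition \eqref{eq:decomp} is what makes $\sigma_{\overline{g},i}$ well-defined modulo $x^n$ and hence makes $P_j(\sigma_{\overline{g},i})$ meaningful to the required precision.
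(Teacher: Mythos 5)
Your proposal follows essentially the same route as the paper's proof: reduce properness to the separatedness (equivalently, by Theorem \ref{thm:Aff-criterion}, affineness) of the two lifted quotients $W_i'=W_i\times\mathbb{A}^1/\mathbb{G}_a$ over $Z_i$, compute the transition cocycle of $\eta_i':W_i'\rightarrow Z_i$ in the charts $u_{\overline{g}}=x^{-n}(y-\sigma_{\overline{g},i})$ to get $x^{-n}\bigl(P_j(\sigma_{\overline{g},i})-P_j(\sigma_{\overline{g}',i})\bigr)$, and translate condition b) of Theorem \ref{thm:Aff-criterion} into the stated divisibility condition. The identification of the cocycle and the two-sided reduction (restriction for necessity, the covering of the closed fiber by the $V_{\partial_i}\times\mathbb{A}^1$ for sufficiency) match the paper's argument, so the proposal is correct.
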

\begin{proof}
The hypothesis on $\partial$ guarantees that the $A$-derivations
$\partial_{i}=x^{n}\partial_{y}+p_{i}(y)\partial_{z_{i}}$ of $A[y,z_{i}]$
are both irreducible. Letting $V_{\partial_{i}}$ be the invariant
open subset of $\mathbb{A}_{X}^{2}={\rm Spec}(A[y,z_{i}])$ associated
to $\partial_{i}$ as defined in $\S$ \ref{par:Open-subs-A3}, it follows
from the construction given in the proof of Proposition \ref{prop:Restricted-quotient}
that $W_{i}=V_{\partial_{i}}\times_{C_{0,i}}C_{i}$ is the total space
of a $\mathbb{G}_{a}$-bundle $\eta_{i}:W_{i}\rightarrow Z_{i}$ over
an appropriate scheme $Z_{i}$. The $\mathbb{G}_{a}$-action on $V_{\partial_{i}}\times{\rm Spec}(\mathbb{C}[z_{j}])\subset\mathbb{A}_{X}^{3}$,
$j\neq i$, induced by the restriction of $\partial$ lifts to one
on $W_{i}\times\mathbb{A}^{1}$ commuting with that by translations
on the second factor and so the quotient $W_{i}'=W_{i}\times\mathbb{A}^{1}/\mathbb{G}_{a}$
has the structure of a $\mathbb{G}_{a}$-bundle $\eta_{i}':W_{i}'\rightarrow Z_{i}$
over $Z_{i}$. Since $\partial$ satisfies the conditions of Lemma
\ref{lem:Bad-Plane-Removal}, it follows from Corollary \ref{cor:Affine-extended-quotient}
and the proof of Proposition \ref{prop:Twin-Loc-trivi} that the properness
of $\partial$ is equivalent to the separatedness of the schemes $W_{i}'$,
$i=1,2$. So it is enough to show that in our case condition b) above
is equivalent to that in Theorem \ref{thm:Aff-criterion}. We only
give the argument for $W_{1}'$, the case of $W_{2}'$ being similar.
With the notation of the proof of Proposition \ref{prop:Restricted-quotient},
$\eta_{1}:W_{1}\rightarrow Z=Z_{1}$ is the $\mathbb{G}_{a}$-bundle
with local trivializations $W_{1}\mid_{Z_{\overline{g}}}\simeq Z_{\overline{g}}\times{\rm Spec}(\mathbb{C}[u_{\overline{g}}])$,
where $u_{\overline{g}}=x^{-n}(y-\sigma_{\overline{g},1})$, $\overline{g}\in G_{1}/H_{1}$,
and transition isomorphism over $Z_{\overline{g}}\cap Z_{\overline{g}'}\simeq{\rm Spec}(A_{x}\otimes_{\mathbb{C}}R_{1})$
given by $u_{\overline{g}}\mapsto u_{\overline{g}'}=u_{\overline{g}}+x^{-n}(\sigma_{\overline{g},1}-\sigma_{\overline{g}',1})$
for every pair of distinct elements $\overline{g},\overline{g}'\in G_{1}/H_{1}$.
The lift to $W_{1}\times\mathbb{A}^{1}$of the induced $\mathbb{G}_{a}$-action
on $V_{\partial_{1}}\times{\rm Spec}(\mathbb{C}[z_{2}])\subset\mathbb{A}_{X}^{3}$
coincides with the one defined locally on the open covering $\{W_{1}\mid_{Z_{\overline{g}}}\simeq Z_{\overline{g}}\times\mathbb{A}^{1},\;\overline{g}\in G_{1}/H_{1}\}$
of $W_{1}\times\mathbb{A}^{1}$ by the derivations $\partial_{\overline{g}}=\partial_{u_{\overline{g}}}+\varphi_{2}(u_{\overline{2}})\partial_{z_{2}}$
of $A\otimes_{\mathbb{C}}R_{1}[u_{\overline{g}},z_{2}]$ where $\varphi_{2}(u_{\overline{g}})=p_{2}(x^{n}u_{\overline{g}}+\sigma_{\overline{g},1})$,
$\overline{g}\in G_{1}/H_{1}$. Letting $\Phi_{2}(t)\in A\otimes_{\mathbb{C}}R_{1}\left[t\right]$
be an integral of $\varphi_{2}(t)\in A\otimes_{\mathbb{C}}R_{1}\left[t\right]$,
a direct computation of invariants shows that $\eta_{1}':W_{1}'=W_{1}\times\mathbb{A}^{1}/\mathbb{G}_{a}\rightarrow Z$
is the $\mathbb{G}_{a}$-bundle with local trivializations $W_{1}'\mid_{Z_{\overline{g}}}\simeq Z_{\overline{g}}\times{\rm Spec}(\mathbb{C}[v_{\overline{g}}])$
where $v_{\overline{g}}=z_{2}-\Phi_{2}(u_{\overline{g}})$, $\overline{g}\in G_{1}/H_{1}$,
and transition isomorphisms 
\[
v_{\overline{g}}\mapsto v_{\overline{g}'}=v_{\overline{g}}+\Phi_{2}(u_{\overline{g}})-\Phi_{2}(u_{\overline{g}'})=v_{g}+x^{-n}(P_{2}(\sigma_{\overline{g},1})-P_{2}(\sigma_{\overline{g}',1})).
\]
So condition b) above for $i=1$ and $j=2$ is precisely equivalent
to that of Theorem \ref{thm:Aff-criterion}. \end{proof}
\begin{rem}
With the notation of $\S$ \ref{par:Prop-Crit-setup}, for every regular
value $\lambda_{i}$ of $\overline{P}_{i}:\mathbb{A}^{1}\rightarrow\mathbb{A}^{1}$,
the expression \ref{eq:decomp} specializes to one of the form 
\[
P_{i}(y)-\lambda_{i}=\overline{S}_{1,i}(y)\prod_{\overline{g}\in G_{i}/H_{i}}(y-\overline{\sigma}_{\overline{g},i})+x^{n}\overline{S}_{2,i}(y)
\]
for elements $\overline{\sigma}_{\overline{g},i}\in A$, $\overline{g}\in G_{i}/H_{i}$,
reducing modulo $x$ to the distinct roots of $\overline{P}_{i}(y)-\lambda_{i}\in\mathbb{C}[y]$,
and polynomials $\overline{S}_{1,i},\overline{S}_{2,i}\in A\left[y\right]$.
One checks that condition b) in Proposition \ref{prop:Proper-Crit}
can be equivalently rephrased in this context as the fact that for
every $i\neq j$ in $\left\{ 1,2\right\} $, every regular value $\lambda_{i}$
of $\overline{P}_{i}$, and every pair of distinct elements $\overline{g},\overline{g}'\in G_{i}/H_{i}$,
$P_{j}(\overline{\sigma}_{\overline{g},i})-P_{j}(\overline{\sigma}_{\overline{g}',i})\in A\setminus x^{n}A$.
This alternative form enables to quickly decide that certain twin-triangular
derivations give rise to improper $\mathbb{G}_{a}$-actions. For instance,
consider the family of derivations $D_{n}=x\partial_{y}+2y\partial_{z_{1}}+\left(1+y^{n}\right)\partial_{z_{2}}$,
$n\geq1$, of $\mathbb{C}\left[x\right]_{(x)}[y,z_{1},z_{2}]$. If
$n=2m$, one has $P_{1}=y^{2}$ and $P_{2}=y\left(y^{2m}+2m+1\right)/(2m+1)$.
At the regular value $0$ of $\overline{P}_{2}$, the $2m$ nonzero
roots of $P_{2}$ come in pairs $\pm\alpha_{k}\in\mathbb{C}^{*}$,
$k=1,\ldots,m$, and so $P_{1}(\alpha_{k})-P_{1}(-\alpha_{k})=0$
for every $k$. It follows that the corresponding action is improper.
In contrast, if $n$ is odd then the criterion is satisfied at the
regular value $0$ of $\overline{P}_{2}$. Actually, for all odd $n$,
it was established in \cite{Deveney2002} by different methods that
the corresponding $\mathbb{G}_{a}$-action is a translation.

For a triangular derivation $\partial=x^{n}\partial_{y}+p_{1}(y)\partial_{z_{1}}+p_{2}(y,z_{1})\partial_{z_{2}}$
of $A[y,z_{1},z_{2}]$ generating a set-theoretically free $\mathbb{G}_{a}$-action
and such that the induced derivation $x^{n}\partial_{y}+p_{1}(y)\partial_{z_{1}}$
of $A[y,z_{1}]$ is irreducible, on can still deduce from Theorem
\ref{thm:Aff-criterion} a more general version of the above criterion
which is again a necessary condition for properness. While more cumbersome
than the twin-triangular case, the criterion can be used to construct
improper actions and has potential to study arbitrary proper triangular
actions. 
\end{rem}
\bibliographystyle{amsplain}

\end{document}